\newsavebox{\@brx}
\newcommand{\llangle}[1][]{\savebox{\@brx}{\(\m@th{#1\langle}\)}%
  \mathopen{\copy\@brx\kern-0.5\wd\@brx\usebox{\@brx}}}
\newcommand{\rrangle}[1][]{\savebox{\@brx}{\(\m@th{#1\rangle}\)}%
  \mathclose{\copy\@brx\kern-0.5\wd\@brx\usebox{\@brx}}}
\newtheorem{theorem}[equation]{Theorem}
\newtheorem{lemma}[equation]{Lemma}
\newtheorem{proposition}[equation]{Proposition}
\newtheorem{question}[equation]{Question}
\newtheorem{definition}[equation]{Definition}
\newtheorem{remark}[equation]{Remark}
\numberwithin{equation}{section}
\newcommand{\Aut}{\operatorname{Aut}}
\newcommand{\Oo}{\operatorname{O}}
\newcommand{\GL}{\operatorname{GL}}
\newcommand{\id}{\mathrm{id}}
\begin{document}

\title[Representation and twisted conjugacy of virtual Artin groups]{Linear representations, crystallographic quotients, and twisted conjugacy of virtual Artin groups}

\author{Neeraj Kumar Dhanwani}
\email{neerajk.dhanwani@gmail.com}
\address{Department of Mathematical Sciences, Indian Institute of Science Education and Research (IISER) Mohali, Sector 81,  S. A. S. Nagar, P. O. Manauli, Punjab 140306, India.}

\author{Pravin Kumar}
\email{pravin444enaj@gmail.com}
\address{Department of Mathematical Sciences, Indian Institute of Science Education and Research (IISER) Mohali, Sector 81,  S. A. S. Nagar, P. O. Manauli, Punjab 140306, India.}

\author{Tushar Kanta Naik}
\email{tushar@niser.ac.in}
\address{School of Mathematical Sciences, National Institute of Science Education and Research, Bhubaneswar, An OCC of Homi Bhabha National Institute, P. O. Jatni, Khurda 752050, Odisha, India.}
\author{Mahender Singh}
\email{mahender@iisermohali.ac.in}
\address{Department of Mathematical Sciences, Indian Institute of Science Education and Research (IISER) Mohali, Sector 81,  S. A. S. Nagar, P. O. Manauli, Punjab 140306, India.}

\subjclass[2020]{Primary 20F36; Secondary 20F55}
\keywords{Artin group, automorphism, braid group, Coxeter group, root system, Tits representation, twisted conjugacy, virtual Artin group, virtual braid group}

\begin{abstract}
Virtual Artin groups were recently introduced by Bellingeri, Paris, and Thiel as broad generalizations of the well-known virtual braid groups. For each Coxeter graph $\Gamma$, they defined the virtual Artin group $VA[\Gamma]$, which is generated by the corresponding Artin group $A[\Gamma]$ and the Coxeter group $W[\Gamma]$, subject to certain mixed relations inspired by the action of $W[\Gamma]$ on its root system $\Phi[\Gamma]$. There is a natural surjection  $ \mathrm{VA}[\Gamma] \rightarrow W[\Gamma]$, with the kernel $PVA[\Gamma]$ representing the pure virtual Artin group. In this paper, we explore linear representations, crystallographic quotients, and twisted conjugacy of virtual Artin groups.  Inspired from the work of Cohen, Wales, and Krammer, we construct a linear representation of the virtual Artin group $VA[\Gamma]$. As a consequence of this representation, we deduce that if $W[\Gamma]$ is a spherical Coxeter group, then $VA[\Gamma]/PVA[\Gamma]'$ is a crystallographic group of dimension $ |\Phi[\Gamma]|$ with the holonomy group $W[\Gamma]$. We also classify the torsion elements in $VA[\Gamma]/PVA[\Gamma]'$ and determine precisely when two elements are conjugate in this group. Further, we investigate twisted conjugacy, and prove that each right-angled virtual Artin group admit the $R_\infty$-property.
\end{abstract}
\maketitle

\section{Introduction}
Braid groups are classical examples of Artin groups that appear in  many areas of mathematics, including combinatorial group theory, geometric group theory, knot theory, mapping class groups of surfaces, and quantum algebra, to name a few. A far-reaching generalization of the classical knot theory is the virtual knot theory, introduced by Kauffman in \cite{MR1721925}. Later, in \cite{MR2128049}, virtual braid groups were introduced, which play the role of braid groups in the Alexander-Markov correspondence between the set of Markov equivalence classes of virtual braids and the set of stable isotopy classes of virtual links. It is known due to Kamada \cite{MR2351010} that the virtual braid group $VB_n$ on $n \ge 2$ strands admits a presentation with generating set $\{\sigma_1,\dots,\sigma_{n-1},\tau_1,\dots,\tau_{n-1} \}$ and defining relations:

\begin{enumerate}
\item $\sigma_i\sigma_j=\sigma_j\sigma_i$ for $|i-j|\ge2$ and $\sigma_i\sigma_j\sigma_i=\sigma_j\sigma_i\sigma_j$ for $|i-j|=1$,
\item $\tau_i\tau_j=\tau_j\tau_i$ for $|i-j|\ge2$, \ $\tau_i\tau_j\tau_i=\tau_j\tau_i\tau_j$ for $|i-j|=1$ and $\tau_i^2=1$ for $1\le i\le n-1$,
\item $\tau_j\sigma_i=\sigma_i\tau_j$ for $|i-j|\ge2$  and $\tau_i\tau_j\sigma_i=\sigma_j\tau_i\tau_j$ for $|i-j|=1$.
\end{enumerate}

Recently, in \cite{MR4753773}, Bellingeri, Paris, and Thiel introduced a far-reaching generalization of the virtual braid group, one corresponding to each Coxeter group. Recall that, a {\it Coxeter matrix} on a countable set $S$ is a symmetric  matrix $(m_{s,t})_{s,t\in S}$, where $m_{s,t} \in \mathbb{N} \cup\{\infty\}$ and satisfy $m_{s,s}=1$ for all $s\in S$ and $m_{s,t}=m_{t,s}\ge2$ for all $s,t\in S$ with $s\neq t$. Each such matrix can be represented by its {\it Coxeter graph} $\Gamma$, which is a labeled simple graph with $S$ as its set of vertices and two vertices $s,t \in S$ are joined by an edge if $m_{s,t}\ge3$, and such an edge is labeled with $m_{s,t}$ if $m_{s,t}\ge 4$. Throughout this paper, we shall assume that  $S$ is finite.
\par

Given elements $x,y$ of a group $G$ and an integer $m\ge1$,  we denote by $\text{Prod}_{R}(x,y,m)$ the word $\cdots yxy$ of length $m$.  Let $\Gamma$ be a Coxeter graph and $(m_{s,t})_{s,t\in S}$ be its Coxeter matrix.  Then, the {\it Artin group} $A[\Gamma]$ associated with $\Gamma$ is the group defined by the following presentation: 
$$
A[\Gamma]=\Bigg\langle S ~\mid~ \text{Prod}_{R}(t,s,m_{s,t})=\text{Prod}_{R}(s,t,m_{s,t})~\text{for all}~s,t\in S~\text{with}~s\neq t~\text{and}~m_{s,t}<\infty \Bigg\rangle.
$$
Further, the {\it Coxeter group} $W[\Gamma]$ associated with $\Gamma$ is the quotient of $A[\Gamma]$ defined by the following presentation: 
\begin{eqnarray*}
W[\Gamma] &=& \Bigg\langle S ~\mid~\text{Prod}_{R}(t,s,m_{s,t})=\text{Prod}_{R}(s,t,m_{s,t})~\text{for all}~s,t\in S~\text{with}~s\neq t~\text{and}~m_{s,t}<\infty, \\
&&  s^2=1~\text{for all}~s\in S~ \Bigg\rangle.
\end{eqnarray*}
Observe that the standard presentation of the virtual braid group $VB_n$ combines the standard presentation of the Artin braid group $B_n$ with that of the  symmetric group $S_n$, along with mixed relations that imitate the action of $S_n$ on its root system. Using this observation, Bellingeri, Paris, and Thiel introduced in \cite{MR4753773} the virtual Artin group $VA[\Gamma]$ associated with a Coxeter graph $\Gamma$ as follows:

\begin{definition}\label{def_virtual_artin_group}
Let $\Gamma$ be a Coxeter graph  with Coxeter matrix $(m_{s,t})_{s,t\in S}$. Let $\mathcal{S}=\{\sigma_s\mid s\in S\}$ and $\mathcal{T}=\{\tau_s\mid s\in S\}$ be two sets that are in one-to-one correspondence with $S$. The {\it virtual Artin group} $\text{VA}[\Gamma]$ associated with $\Gamma$ is the group defined by the presentation with generating set $\mathcal{S}\sqcup \mathcal{T}$ and defining relations:
\begin{enumerate}
\item $\text{Prod}_{R}(\sigma_t,\sigma_s,m_{s,t})=\text{Prod}_{R}(\sigma_s,\sigma_t,m_{s,t})$ for all $s,t\in S$ with $s\neq t$ and $m_{s,t} < \infty$,
\item $\text{Prod}_{R}(\tau_t,\tau_s,m_{s,t})=\text{Prod}_{R}(\tau_s,\tau_t,m_{s,t})$ for all $s,t\in S$ with $s\neq t$ and $m_{s,t} < \infty$; 
\item[] $\tau_s^2=1$ for $s\in S$,
\item $\text{Prod}_{R}(\tau_s,\tau_t,m_{s,t}-1)\sigma_s=\sigma_r\text{Prod}_{R}(\tau_s,\tau_t,m_{s,t}-1)$, where $r=s $ if $m_{s,t}$ is even and $r=t $ if $m_{s,t}$ is odd, for all $s,t\in S$ with $s\neq t$ and $m_{s,t} < \infty$.
\end{enumerate}
\end{definition}

Let $\Gamma$ be a Coxeter graph  with Coxeter matrix $(m_{s,t})_{s,t\in S}$. There is a surjective group homomorphism $\pi_P: \mathrm{VA}[\Gamma] \rightarrow W[\Gamma]$ given by 
$$\pi_P(\sigma_s) = \pi_P(\tau_s)= s$$ 
for all $s \in S$. The kernel $PVA[\Gamma]$  of $\pi_P$ is called the {\it pure virtual Artin group}.  The homomorphism $\pi_P$ admits a section $\iota_W: W[\Gamma] \rightarrow \mathrm{VA}[\Gamma]$ given by $\iota_W(s)=\tau_s$ for all $s \in S$. Hence, we have the semi-direct product decomposition 
\begin{equation}\label{decomposition of VA Gamma}
\mathrm{VA}[\Gamma]=\mathrm{PVA}[\Gamma] \rtimes W[\Gamma].
\end{equation}
 We denote the usual action of $W[\Gamma]$ on $PVA[\Gamma]$ by 
\begin{equation}\label{conjugation action of coxeter group}
w\cdot g=\iota_W(w)\,g\,\iota_W(w)^{-1}
\end{equation}
 for $w\in W[\Gamma]$ and $g\in PVA[\Gamma]$.
\par

Let $\Pi=\{\alpha_s\mid s\in S\}$ be a set in one-to-one correspondence with $S$, whose elements we refer to  as {\it simple roots}. Let $V$ be the real vector space having $\Pi$ as its basis. Let $\langle\cdot,\cdot\rangle:V\times V\to \mathbb{R}$ be the symmetric bilinear form defined on the basis by
$$
\langle \alpha_s,\alpha_t\rangle=\left\{\begin{array}{ll}
-2\cos(\pi/m_{s,t})&\text{if}~m_{s,t}<\infty,\\
-2&\text{if}~m_{s,t}=\infty.
\end{array}\right.
$$
This gives a faithful linear representation $$\rho: W[\Gamma]\hookrightarrow\GL(V),$$ called the \emph{Tits representation} of $W[\Gamma]$, defined by
$$
\rho(s)(v)=v-\langle v,\alpha_s\rangle\alpha_s
$$
for $v\in V$ and $s\in S$. A direct check shows that 
$$\langle \beta, \gamma \rangle = \langle \rho(w)(\beta), \rho(w)(\gamma) \rangle$$
 for all $\beta, \gamma \in V$ and $w \in W[\Gamma]$. The set $\Phi[\Gamma]=\{\rho(w)(\alpha_s)\mid s\in S\text{ and }w\in W[\Gamma]\}$ is called the {\it root system} of $\Gamma$.  It is known due to Deodhar \cite{MR0647210} that $\Phi[\Gamma]$ is finite if and only if $W[\Gamma]$ is finite. It is important to note that Definition \ref{def_virtual_artin_group} integrates the standard presentation of the Artin group $A[\Gamma]$, the standard presentation of the Coxeter group $W[\Gamma]$, and some mixed relations that mimic the action of $W[\Gamma]$ on its root system $\Phi[\Gamma]$.  We say that $\Gamma$ or $A[\Gamma]$ or $W[\Gamma]$ or $VA[\Gamma]$ is {\it spherical} if the Coxeter group  $W[\Gamma]$ is finite. Further, it is called  {\it right-angled} if for each $s \ne t$, we have $m_{s, t}=2$ or $\infty$. 
\par

In this paper, we explore linear representations, crystallographic quotients, and twisted conjugacy of virtual Artin groups. The layout of the paper is as follows. In Section \ref{section linear representation}, inspired from the work of Cohen and Wales \cite{MR1942303} and Krammer \cite{MR1888796}, we construct a linear representation of the virtual Artin group $VA[\Gamma]$ such that its kernel contains the commutator subgroup $PVA[\Gamma]'$ of $PVA[\Gamma]$ (Theorem \ref{linear rep VAG}). Using this result, we further prove that  the group $PVA[\Gamma]/PVA[\Gamma]'$ is a free abelian group of rank $|\Phi[\Gamma]|$ (Proposition \ref{prop:freeabelian}). In Section \ref{sec crystallographic quotient}, we analyse crystallographic quotients of $VA[\Gamma]$. We prove that if $W[\Gamma]$ is a spherical Coxeter group with root system $\Phi[\Gamma]$, then $VA[\Gamma]/PVA[\Gamma]'$ is a crystallographic group of dimension $ |\Phi[\Gamma]|$ with the holonomy group $W[\Gamma]$ (Theorem \ref{thm crystallographic SVAG}). We also classify the torsion elements in $VA[\Gamma]/PVA[\Gamma]'$ (Theorem \ref{crystallographic torsion}) and determine precisely when two elements are conjugate in this group (Theorem \ref{crystallographic conjugacy}). Finally, in Section \ref{section twisted conjugacy}, we investigate twisted conjugacy, and prove that each right-angled virtual Artin group admit the $R_\infty$-property (Theorem \ref{R infinity theorem}). 
\medskip

\section{Linear representation of $VA[\Gamma]$}\label{section linear representation}
In this section, we construct a linear representation of the virtual Artin group $VA[\Gamma]$, drawing inspiration from the work of Cohen and Wales \cite{MR1942303}, which in turn was influenced by Krammer \cite{MR1888796}. This representation will enable us to determine the exact rank of  $PVA[\Gamma]/PVA[\Gamma]'$. Later, in Section \ref{sec crystallographic quotient}, this will further help us prove that $VA[\Gamma]/PVA[\Gamma]'$ is crystallographic of dimension the rank of  $PVA[\Gamma]/PVA[\Gamma]'$ for the spherical case.

\subsection{Presentation of $PVA[\Gamma]$}
Before we proceed, let us revisit the presentation of $PVA[\Gamma]$ from \cite{MR4753773}, which we will be using. Let $\text{VA}[\Gamma]$ be the virtual Artin group associated with a Coxeter graph $\Gamma$, and let $\Phi[\Gamma]$ be its root system. Further, denote by $\Phi^+[\Gamma]$ (respectively $\Phi^-[\Gamma]$) the set of elements $\beta\in\Phi[\Gamma]$ that can be written as $\beta=\sum_{s\in S}\lambda_s\alpha_s$ with $\lambda_s\ge0$ (respectively $\lambda_s\le0$) for all $s\in S$. It is well-known that $\Phi[\Gamma]=\Phi^+[\Gamma]\sqcup\Phi^-[\Gamma]$. In fact, $\Phi^-[\Gamma]=\{-\beta\mid\beta\in\Phi^+[\Gamma]\}$. The elements of $\Phi^+[\Gamma]$ are called {\it positive roots}, whereas that of $\Phi^-[\Gamma]$ are called {\it negative roots}.
\par

It has been proved in \cite{MR4753773} that the pure virtual Artin group $PVA[\Gamma]$ has a generating set labelled by the elements of the root system $\Phi[\Gamma]$ of $W[\Gamma]$, and the (left) action of $W[\Gamma]$ on $PVA[\Gamma]$ is determined by the action of $W[\Gamma]$ on its root system. We elaborate on this for later use.
\par

For each $\beta\in\Phi[\Gamma]$, let $r_\beta:V\to V$ be the linear reflection defined by $$r_\beta(v)=v-\langle v,\beta\rangle\beta$$
for all $v \in V$. By definition, we have $r_{\alpha_s}=\rho(s)$ and
$r_\beta=r_{-\beta}$ for all $s\in S$ and $\beta\in\Phi[\Gamma]$.  If $s\in S$ and $w\in W[\Gamma]$ are such that $\beta= \rho(w)(\alpha_s)$, then $r_\beta= \rho(wsw^{-1})$. Since $\rho$ is faithful, we sometimes identify $r_\beta$ as an element of $W[\Gamma]$ for each $\beta\in\Phi[\Gamma]$. 
\par

Define a new Coxeter matrix $\widehat M=(\widehat m_{\beta,\gamma})_{\beta,\gamma\in\Phi[\Gamma]}$ as follows:
\begin{itemize}
\item[(a)] $\widehat m_{\beta,\beta}=1$ for all $\beta\in\Phi[\Gamma]$.
\item[(b)] Let $\beta,\gamma\in\Phi[\Gamma]$ such that $\beta\neq\gamma$. If there exist $w\in W[\Gamma]$ and $s,t\in S$ such that $\beta= \rho(w)(\alpha_s)$, $\gamma=\rho(w)(\alpha_t)$ and $m_{s,t}<\infty$, then $\widehat m_{\beta,\gamma}=m_{s,t}$.
\item[(c)]  $\widehat m_{\beta,\gamma}=\infty$ otherwise.
\end{itemize}
Note that, in (b),  the definition of $\widehat m_{\beta,\gamma}$ does not depend on the choice of $w$, $s$ and $t$. For $\beta\in\Phi[\Gamma]$, if $s\in S$ and $w\in W[\Gamma]$ are such that $\beta=\rho(w)(\alpha_s)$, then we set $$\zeta_\beta :=w\cdot(\tau_s\sigma_s)\in PVA[\Gamma].$$ Let $\beta,\gamma\in\Phi[\Gamma]$ such that $m:=\widehat m_{\beta,\gamma}<\infty$. We define roots $\beta_1,\dots,\beta_m\in\Phi[\Gamma]$ by setting $\beta_1=\beta$ and
$$
\beta_k=\left\{\begin{array}{ll}
{\rm Prod}_R(r_\gamma,r_\beta,k-1)(\gamma)&\text{if}~k~\text{is even},\\
{\rm Prod}_R(r_\beta,r_\gamma,k-1)(\beta)&\text{if}~k~\text{is odd},
\end{array}\right.
$$
for $k\ge2$. We set 
\begin{equation}\label{eq:relPVA}
Z(\gamma,\beta,\widehat m_{\beta,\gamma}) :=\zeta_{\beta_m}\cdots\zeta_{\beta_2}\zeta_{\beta_1},
\end{equation}
and note that $Z(\beta,\gamma,\widehat m_{\beta,\gamma})=\zeta_{\beta_1}\zeta_{\beta_2}\cdots\zeta_{\beta_m}$, the reverse word of $Z(\gamma,\beta,\widehat m_{\beta,\gamma})$. We will need the following result  \cite[Theorem 2.6]{MR4753773}.

\begin{theorem}\label{prop:presentationpva}
The group $PVA[\Gamma]$ has a presentation with generators $\{ \zeta_\beta  \mid \beta\in\Phi[\Gamma] \}$ and defining relations
$$
Z(\gamma,\beta,\widehat m_{\beta,\gamma})=Z(\beta,\gamma,\widehat m_{\beta,\gamma})
$$
for $\beta,\gamma\in\Phi[\Gamma]$ with $\beta\neq\gamma$ and $\widehat m_{\beta,\gamma}<\infty$.
\end{theorem}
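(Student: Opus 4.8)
The plan is to read the presentation off from the one for $VA[\Gamma]$ in Definition~\ref{def_virtual_artin_group} by the Reidemeister–Schreier rewriting process, applied to the split extension underlying the semidirect product $VA[\Gamma]=PVA[\Gamma]\rtimes W[\Gamma]$. Since the section $\iota_W\colon W[\Gamma]\to VA[\Gamma]$, $s\mapsto\tau_s$, is a homomorphism, its image $T:=\iota_W(W[\Gamma])$ is a complement of $PVA[\Gamma]$, and I would realise $T$ as a Schreier transversal by representing each $w\in W[\Gamma]$ by the $\mathcal T$-word $\iota_W(w)$ obtained from a fixed geodesic (say ShortLex) normal form of $w$ in the generators $S$; such normal forms are prefix-closed, so $T$ is a genuine Schreier transversal consisting of words in $\mathcal T$ alone. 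Write $\overline{g}=\iota_W(\pi_P(g))\in T$ for the transversal representative of $g\in VA[\Gamma]$.

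By Reidemeister–Schreier, $PVA[\Gamma]$ is presented on the generators $y_{u,x}:=u\,x\,\overline{ux}^{\,-1}$ ($u\in T$, $x\in\mathcal S\sqcup\mathcal T$) with relators the rewrites $\tau(u\,r\,u^{-1})$ of the conjugates of the defining relators $r$ of $VA[\Gamma]$. The initial reductions are routine: since $\iota_W$ is a homomorphism, $\overline{u\tau_s}=u\tau_s$, whence $y_{u,\tau_s}=1$ for all $u,s$, and after discarding these the group is generated by the $y_{u,\sigma_s}$; and a short computation using $\tau_s^2=1$ and the conjugation action $w\cdot g=\iota_W(w)\,g\,\iota_W(w)^{-1}$ gives $y_{u,\sigma_s}=u\,\sigma_s\tau_s\,u^{-1}=\pi_P(u)\cdot(\sigma_s\tau_s)=\zeta_{-\rho(\pi_P(u))(\alpha_s)}$. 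Using the well-definedness of $\zeta_\beta$, the generating set is thus $\{\zeta_\beta\mid\beta\in\Phi[\Gamma]\}$, listed redundantly along the surjection $(u,s)\mapsto-\rho(\pi_P(u))(\alpha_s)$ of $T\times S$ onto $\Phi[\Gamma]$; since any two $y$'s with the same image are equal elements of $PVA[\Gamma]$, the corresponding identifying relations are consequences of the Reidemeister–Schreier relators, so Tietze transformations allow passing to this non-redundant generating set.

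Next I would rewrite the three families of defining relators. Those of type~(2) are words in $\mathcal T$, so their rewrites involve only the discarded generators $y_{u,\tau_s}$ and are trivial. Those of type~(3), conjugated by $u$, rewrite — using $\tau_s^2=1$, the fact that $\mathcal T$-subwords rewrite trivially, and the Coxeter identity $q\,s\,q^{-1}=r$ in $W[\Gamma]$ with $q=\text{Prod}_R(s,t,m_{s,t}-1)$ — to a single expression $y_{\iota_W(\pi_P(u)q),\,\sigma_s}\,y_{\iota_W(\pi_P(u)),\,\sigma_r}^{-1}$; since $\rho(q)(\alpha_s)=\alpha_r$ in the dihedral subgroup $\langle s,t\rangle$, both factors carry the same root, so these relators are already consequences of the identifications from the previous paragraph. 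Finally, for a type~(1) relator $\text{Prod}_R(\sigma_t,\sigma_s,m)\,\text{Prod}_R(\sigma_s,\sigma_t,m)^{-1}$ with $m=m_{s,t}<\infty$, conjugated by $u$ with $w:=\pi_P(u)$: each of the $2m$ occurrences of a $\sigma$-letter contributes one $\zeta^{\pm1}$, and by tracking the $W[\Gamma]$-image of the prefix before each letter, splicing the two halves with the braid relation $\text{Prod}_R(t,s,m)=\text{Prod}_R(s,t,m)$ in $W[\Gamma]$, and identifying the intermediate roots $\beta_1,\dots,\beta_m$ through their defining formulas in terms of the reflections $r_\beta,r_\gamma$, one should recover exactly the relator $Z(\gamma,\beta,\widehat m_{\beta,\gamma})\,Z(\beta,\gamma,\widehat m_{\beta,\gamma})^{-1}$ of \eqref{eq:relPVA}, where $\beta=\rho(w)(\alpha_s)$ and $\gamma=\rho(w)(\alpha_t)$. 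As $w$ ranges over $W[\Gamma]$, the pair $(\beta,\gamma)$ ranges over all pairs with $\widehat m_{\beta,\gamma}=m_{s,t}$, and by the construction of $\widehat M$ every pair with $\widehat m_{\beta,\gamma}<\infty$ arises in this way; this yields precisely the stated presentation.

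I expect the main obstacle to be exactly the last paragraph's bookkeeping: matching, letter by letter, the Reidemeister rewrite of a braid relator among the $\sigma$'s with the word $Z(\gamma,\beta,\widehat m_{\beta,\gamma})$, verifying that the prescribed roots $\beta_k$ occur in the correct positions and that the two halves splice with no leftover factors — this is where the Coxeter combinatorics of dihedral subgroups and the definition of $\widehat M$ must be coordinated carefully. By contrast, the vanishing of the $\tau$-generators, the triviality of the type~(2) and type~(3) rewrites, the formula $y_{u,\sigma_s}=\zeta_{-\rho(\pi_P(u))(\alpha_s)}$, and the Tietze reduction of the redundant generating set should all be short once the transversal has been fixed.
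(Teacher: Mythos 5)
First, note that the paper does not actually prove this statement: it is quoted verbatim from Bellingeri--Paris--Thiel \cite[Theorem 2.6]{MR4753773}, so there is no in-paper argument to compare against. Your strategy --- Reidemeister--Schreier applied to the split extension $1\to PVA[\Gamma]\to VA[\Gamma]\to W[\Gamma]\to 1$ with the transversal $\iota_W(W[\Gamma])$ realised by prefix-closed normal forms in $\mathcal T$ --- is exactly the standard route, and it is the one taken in the cited source. Your preliminary reductions are correct: $y_{u,\tau_s}=1$ because $\iota_W$ is a homomorphism; $y_{u,\sigma_s}=u\sigma_s\tau_s u^{-1}=\zeta_{-\rho(\pi_P(u))(\alpha_s)}$; the type~(2) relators rewrite trivially; and the type~(3) relator conjugated by $u$ rewrites to $y_{\iota_W(wq),\sigma_s}\,y_{\iota_W(w),\sigma_r}^{-1}$ with $q=\mathrm{Prod}_R(s,t,m_{s,t}-1)$, which collapses by \eqref{prod st m-1action on s}. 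The Tietze argument passing from the redundant generating set $\{y_{u,\sigma_s}\}$ to $\{\zeta_\beta\}$ is also legitimate, though it silently leans on the well-definedness of $\zeta_\beta$ (that $-\rho(w)(\alpha_s)=-\rho(w')(\alpha_{s'})$ forces $y_{\iota_W(w),\sigma_s}=y_{\iota_W(w'),\sigma_{s'}}$ in $PVA[\Gamma]$), which is itself a nontrivial fact established in \cite{MR4753773} and should be cited rather than absorbed into the phrase ``using the well-definedness of $\zeta_\beta$.''

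The genuine gap is the one you yourself flag: the entire content of the theorem sits in the final paragraph, where the rewrite of $u\,\mathrm{Prod}_R(\sigma_t,\sigma_s,m)\,\mathrm{Prod}_R(\sigma_s,\sigma_t,m)^{-1}u^{-1}$ must be matched, letter by letter, with $Z(\gamma,\beta,\widehat m_{\beta,\gamma})\,Z(\beta,\gamma,\widehat m_{\beta,\gamma})^{-1}$, and this is asserted (``one should recover exactly\dots'') rather than carried out. This is not routine bookkeeping that can be waved through: the generator attached to the $k$-th $\sigma$-letter carries the root $-\rho(w p_k)(\alpha_{x_k})$ (note the sign), while the roots $\beta_k$ in \eqref{eq:relPVA} are defined by alternating products of the reflections $r_\beta, r_\gamma$ applied to $\beta=\rho(w)(\alpha_s)$ and $\gamma=\rho(w)(\alpha_t)$; one must verify that these two lists of roots coincide in the right order on both halves of the relator, and that the splice leaves no residual factors. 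The definitions of $\widehat M$ and of the $\beta_k$ were engineered precisely so that this match works, so until the identification of the $k$-th root on each side is actually computed (an induction inside the dihedral subgroup $\langle s,t\rangle$ suffices), the proposal is a correct plan rather than a proof.
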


\begin{remark}\label{rmk:action}
For each $w\in W[\Gamma]$ and $\zeta_\beta\in PVA[\Gamma]$, a direct check shows that  $$w\cdot \zeta_\beta=\zeta_{\rho(w)(\beta)}.$$
\end{remark}
\medskip

\subsection{Construction of linear representation of $VA[\Gamma]$} 
In this subsection, we construct a linear representation of the virtual Artin group $VA[\Gamma]$. Consider the ring $R=\mathbb Z [x^{\pm 1}, y^{\pm 1}]$ of Laurent polynomials in the commuting variables $x$ and $y$. Let $U$ be the free $R$-module with basis $\{e_\beta  \mid \beta \in \Phi^{+}[\Gamma] \}$. For any $\beta \in \Phi^{+}[\Gamma]$ and $s \in S$, we define

$$
\psi\left(\sigma_s\right)\left(e_\beta\right)= \begin{cases}
x e_\beta  & \text{ if } \beta=\alpha_s, \\ 
e_{\rho(s)(\beta)} & \text{ otherwise,}
 \end{cases}
$$
and
$$
\psi\left(\tau_{s}\right)\left(e_\beta\right)= \begin{cases}
e_{\beta} & \text{ if } \beta=\alpha_s,  \\ 
y^{\epsilon(s,\beta)} e_{\rho(s)(\beta)}  & \text{ if } \text{ otherwise,} \end{cases}
$$
where $$\epsilon(s,\beta)= \begin{cases}
+1  & \text{ if } \langle \beta , \alpha_s \rangle < 0,  \\ 
-1 & \text{ if } \langle \beta , \alpha_s \rangle > 0, \\
0  & \text{ if } \langle \beta , \alpha_s \rangle =0
 .\end{cases}
$$ 

For any $\beta \in \Phi^{+}[\Gamma]$ and $s \in S$, a direct check shows that the root $\rho(s)(\beta)$ is positive unless $\beta=\alpha_s$. Thus, each $\psi(\sigma_s)$ and $\psi(\tau_s)$ is a well-defined $R$-linear automorphism of $U$. Recall the notation $\mathcal{S} =\{\sigma_s \mid s \in S\}$ and $\mathcal{T}= \{\tau_s \mid s \in S\}$. Thus, we have a group homomorphism  $\psi$ from the free group $F(\mathcal{S} \cup \mathcal{T})$ on $\mathcal{S} \cup \mathcal{T}$ to the group $\GL(U)$ of $R$-linear automorphisms of $U$.

\begin{remark}\label{rmk:pi(w)}
For each $w \in W[\Gamma]$, let 
$$\Pi(w)= \{\beta \in \Phi^+[\Gamma] \mid  \rho(w)(\beta) \in \Phi^-[\Gamma] \}.$$
Let $w=s_1 \cdots s_r$ be a  reduced expression of $w$ and 
$$
\beta^w_i:=\rho(s_r s_{r-1} \cdots s_{i+1})\left(\alpha_{s_i}\right)~\textrm{for each}~1 \le i \le r-1~\textrm{and}~ \beta^w_r:=\alpha_{s_r}.
$$
 Then,  $\beta^w_i$'s are distinct and $\Pi(w)=\left\{\beta^w_1, \ldots, \beta^w_r\right\}$. Consequently, $|\Pi(w)|= \ell(w)=r$, where $\ell(w)$ is the length of $w$.
\end{remark}

For each reduced word $\widehat{w}= \tau_{s_1}^{\varepsilon_1}\tau_{s_2}^{\varepsilon_2}\cdots \tau_{s_r}^{\varepsilon_r}$ in the free group $F(\mathcal{T}$) on the set $\mathcal{T}$, where $\varepsilon_i\in \{1,-1\}$, let $w=s_1^{\varepsilon_1}s_2^{\varepsilon_2}\cdots s_r^{\varepsilon_r}$ be the corresponding element of $W[\Gamma]$. Then, for $\beta \in \Phi^{+}[\Gamma]$, we define $\kappa(\widehat{w}, \beta) \in \mathbb Z$ such that   
\begin{equation}\label{kappa def}
 \psi(\widehat{w})(e_\beta)=\begin{cases} y^{\kappa(\widehat{w}, \beta)} e_{-\rho(w)(\beta)}  & \text{ if } \beta\in \Pi(w), \\ 
y^{\kappa(\widehat{w}, \beta)} e_{\rho(w)(\beta)} & \text{ otherwise}.
\end{cases}
\end{equation}
Note that, if $\widehat{w}=\tau_s$ and $\beta \neq \alpha_s$, then  $\kappa(\widehat{w}, \beta)=\epsilon(s,\beta)$.
\par

\begin{lemma}\label{lemma on representation}
The following assertions hold:
\begin{enumerate}
\item If $\beta \in \Phi^{+}[\Gamma]$ and $s \in S$, then $\epsilon(s,\beta)=-\epsilon (s, \rho(s)(\beta))$.
\item Let $\widehat{w}=\tau_{s_1}\tau_{s_2}\cdots \tau_{s_r}$ be an element of $F(\mathcal{T}$), $\widetilde{w}=\tau_{s_r}\tau_{s_{r-1}}\cdots \tau_{s_1}$ its reverse and $w=s_1 s_2 \cdots s_r$ the corresponding element of $W[\Gamma]$. If $\beta \not\in \Pi(w)$ and $\gamma =\rho(w)(\beta)$, then $\kappa(\widehat{w}, \beta)=-\kappa(\widetilde{w}, \gamma)$.
\end{enumerate}
\end{lemma}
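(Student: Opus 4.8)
For part (1), the claim is a direct computation from the definition of $\epsilon$, using the $W[\Gamma]$-invariance of the bilinear form. The plan is to write $\gamma = \rho(s)(\beta) = \beta - \langle \beta, \alpha_s\rangle \alpha_s$ and compute $\langle \gamma, \alpha_s\rangle = \langle \beta, \alpha_s\rangle - \langle \beta,\alpha_s\rangle\langle \alpha_s,\alpha_s\rangle = \langle\beta,\alpha_s\rangle(1 - 2) = -\langle \beta, \alpha_s\rangle$, since $\langle \alpha_s, \alpha_s\rangle = -2\cos(\pi/1) = 2$. Hence $\langle \gamma,\alpha_s\rangle$ and $\langle\beta,\alpha_s\rangle$ have opposite signs (and one is zero iff the other is), so $\epsilon(s,\gamma) = -\epsilon(s,\beta)$ by the three-case definition. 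The one point to check is that $\gamma \in \Phi^+[\Gamma]$ whenever $\beta \in \Phi^+[\Gamma]$ and $\beta \neq \alpha_s$, which is exactly the fact already recorded just before the statement of the lemma; in the degenerate case $\beta = \alpha_s$ both sides are $0$ and there is nothing to prove, though then $\rho(s)(\beta) = -\alpha_s$ is not positive and one should simply note that $\epsilon$ as defined only depends on the sign of the pairing, which is unchanged by negation, so the identity holds anyway.

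For part (2), I would argue by induction on $r = \ell$ of the word (note $\widehat w$ need not be reduced as a word in $W[\Gamma]$, but the induction is on word length, so this is fine). Write $\widehat w = \tau_{s_1} \widehat w'$ with $\widehat w' = \tau_{s_2}\cdots \tau_{s_r}$ and $w = s_1 w'$, and correspondingly $\widetilde w = \widetilde{w'}\,\tau_{s_1}$. The strategy is to track how the exponent of $y$ accumulates when $\psi(\widehat w)$ is applied to $e_\beta$ versus when $\psi(\widetilde w)$ is applied to $e_\gamma$, and to show the two total exponents are negatives of one another. Apply $\psi(\widehat w') $ to $e_\beta$ first: it contributes $y^{\kappa(\widehat w',\beta)}$ times a basis vector $e_{\beta'}$ or $e_{-\beta'}$ where $\beta' = \rho(w')(\beta)$; then applying $\psi(\tau_{s_1})$ contributes a further factor $y^{\epsilon(s_1, \pm\beta')}$ (with the sign convention that $\epsilon$ only sees the sign of the pairing). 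Running $\psi(\widetilde w)$ on $e_\gamma$ in the other order, $\psi(\tau_{s_1})$ acts first giving $y^{\epsilon(s_1,\gamma)}$ times $e_{\pm\rho(s_1)(\gamma)}$, and then $\psi(\widetilde{w'})$ contributes $y^{\kappa(\widetilde{w'}, \rho(s_1)(\gamma))}$. Setting $\beta'' = \rho(s_1)(\gamma) = \rho(s_1 w)(\beta) = \rho(w')(\beta) \cdot(\text{up to sign})$, one matches $\beta''$ with $\pm\beta'$ and applies the inductive hypothesis to the pair $(\widehat w', \widetilde{w'})$ at the appropriate root, plus part (1) to relate the two $\epsilon(s_1, \cdot)$ terms. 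Summing the exponents and using $\kappa(\widehat w',\beta) = -\kappa(\widetilde{w'}, \beta')$ gives the desired cancellation.

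The main obstacle, and the place that needs care, is the bookkeeping of \emph{signs of roots} (the $\pm$ ambiguity in \eqref{kappa def}) as one passes back and forth between $\Pi(w)$, $\Pi(w')$, and $\Pi(s_1 w)$. Precisely, whether $\beta \in \Pi(w)$ need not match whether $\beta \in \Pi(w')$, and the extra reflection $r_{s_1}$ can flip a positive root to negative or vice versa; one must verify that the exponent $\kappa$ defined via \eqref{kappa def} is insensitive to this flip in the right way, i.e. that $\kappa(\tau_s \widehat u, \beta) = \epsilon(s, \rho(u)(\beta)) + \kappa(\widehat u, \beta)$ regardless of which side of the wall $\rho(u)(\beta)$ lies on, because $\epsilon$ depends only on the sign of a pairing and that sign is read off consistently. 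I would isolate this as a short preliminary computation (a "cocycle-type" additivity formula for $\kappa$ under left multiplication by a single $\tau_s$), prove it directly from the definitions of $\psi(\sigma_s)$, $\psi(\tau_s)$ and \eqref{kappa def}, and then part (2) follows by applying that formula once on each side together with part (1) and the inductive hypothesis. Part (1) is genuinely routine; the additivity formula for $\kappa$ is where essentially all the content of part (2) sits.
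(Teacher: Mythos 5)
Your proposal is correct and follows essentially the same route as the paper: part (1) is the invariance computation $\langle\rho(s)(\beta),\alpha_s\rangle=-\langle\beta,\alpha_s\rangle$, and your induction on word length with the additivity formula for $\kappa$ unrolls to exactly the paper's telescoping expansion of $\kappa(\widetilde{w},\gamma)$ as a sum of $\epsilon$-terms, to which part (1) is applied termwise. One small slip: in the degenerate case $\beta=\alpha_s$ of part (1) the two sides are $-1$ and $+1$ (not $0$), since $\langle\alpha_s,\alpha_s\rangle=2$; your main computation already covers this case, so the extra remark is unnecessary.
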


\begin{proof}
For  $\beta \in \Phi^{+}[\Gamma]$ and $s \in S$, we have $\langle \beta , \alpha_s \rangle = \langle \rho(s)(\beta) , \rho(s)(\alpha_s) \rangle= -\langle \rho(s)(\beta) , \alpha_s \rangle$, and hence $\epsilon(s,\beta)=-\epsilon (s, \rho(s)(\beta))$.
\par

For the second assertion, since $\beta \not\in \Pi(w)$, we see that
\begin{eqnarray*}
&& \kappa(\widetilde{w}, \gamma)\\
 &=& \epsilon(s_1,\gamma) + \epsilon(s_{2},\rho(s_1)(\gamma))+\epsilon(s_{3},\rho(s_{2}s_1)(\gamma))+\cdots+\epsilon(s_r,\rho(s_{r-1}\cdots s_1)(\gamma))\\
&=& \epsilon(s_1,\rho(s_1)\,\rho(s_{2}\cdots s_{r})(\beta)) + \epsilon(s_{2},\rho(s_2)\,\rho(s_3\cdots s_r)(\beta))+\cdots +\epsilon(s_r,\rho(s_{r})(\beta))\\
&=& - \Bigg(\epsilon(s_1,\rho(s_{2}\cdots s_{r})(\beta)) + \epsilon(s_{2},\rho(s_3\cdots s_r)(\beta))+\epsilon(s_{3},\rho(s_{4}\cdots s_r)(\beta))+\cdots+\epsilon(s_r,\beta) \Bigg)\\
&& \textrm{(by assertion (1))}\\
&=&-\Bigg(\epsilon(s_r,\beta) + \epsilon(s_{r-1},\rho(s_r)(\beta))+\epsilon(s_{r-2},\rho(s_{r-1}s_r)(\beta))+\cdots+\epsilon(s_1,\rho(s_2\cdots s_r)(\beta))\Bigg)\\
&=&-\kappa(\widehat{w}, \beta),
\end{eqnarray*}
and hence $\kappa(\widehat{w}, \beta)=-\kappa(\widetilde{w}, \gamma)$.
\end{proof}

\begin{theorem}\label{linear rep VAG}
Let $\text{VA}[\Gamma]$ be the virtual Artin group associated with a Coxeter graph $\Gamma$. Then, the map $\psi$ induces a representation $\Psi:VA[\Gamma]\to \GL(U)$ such that the commutator subgroup $PVA[\Gamma]'$ of $PVA[\Gamma]$ is contained in $\ker(\Psi)$.
\end{theorem}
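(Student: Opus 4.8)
The plan is to establish the statement in two stages: (I) that $\psi$ sends every defining relator of Definition~\ref{def_virtual_artin_group} to the identity operator, so that it induces a homomorphism $\Psi\colon VA[\Gamma]\to\GL(U)$; and (II) that $\Psi\big(PVA[\Gamma]\big)$ is abelian, which yields $PVA[\Gamma]'\subseteq\ker\Psi$ at once since the $\zeta_\beta$ generate $PVA[\Gamma]$.

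For stage~(I) the key observation is that every relator of Definition~\ref{def_virtual_artin_group} involves only two indices $s,t\in S$, so the verification localises to a rank-two picture. Since $\rho(s)$ and $\rho(t)$ preserve both the root subsystem $\Phi_{\{s,t\}}$ spanned by the $W_{\{s,t\}}$-orbit of $\{\alpha_s,\alpha_t\}$ and its complement in $\Phi^+[\Gamma]$, the submodules $U_{\{s,t\}}=\bigoplus_{\beta\in\Phi^+\cap\Phi_{\{s,t\}}}Re_\beta$ and $U^{\{s,t\}}=\bigoplus_{\beta\in\Phi^+\setminus\Phi_{\{s,t\}}}Re_\beta$ are invariant under $\psi(\sigma_s),\psi(\sigma_t),\psi(\tau_s),\psi(\tau_t)$, and $U=U_{\{s,t\}}\oplus U^{\{s,t\}}$. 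On $U_{\{s,t\}}$ the three families of relations are checked by an explicit matrix computation inside the dihedral group $W_{\{s,t\}}$, organised uniformly in $m_{s,t}$ (with $m_{s,t}=\infty$ trivial). On $U^{\{s,t\}}$ the operators $\psi(\sigma_s)$ and $\psi(\sigma_t)$ are honest permutation matrices (of $\rho(s)$ and $\rho(t)$, with no $x$-factors), so relation~(1) follows immediately from the braid relation in $W[\Gamma]$; and relations~(2), (3) then reduce to identities for the integers $\kappa(\widehat w,\beta)$ of \eqref{kappa def}. Here $\psi(\tau_s)^2=\mathrm{id}$ follows from Lemma~\ref{lemma on representation}(1); the braid relation among the $\tau$'s amounts to the word-independence of $\kappa(\widehat w,\beta)$ over reduced expressions of a fixed $w\in W[\Gamma]$, which by Matsumoto's theorem reduces again to the rank-two case and is handled using Lemma~\ref{lemma on representation}(2); and relation~(3) reduces, via the additivity of $\kappa$ along a word together with the elementary identity $\rho\big(v^{-1}\big)(\alpha_r)=\alpha_s$ for $v=\mathrm{Prod}_R(s,t,m_{s,t}-1)$ (with $r$ as in Definition~\ref{def_virtual_artin_group}), to the same word-independence.

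For stage~(II), a short direct computation shows that $\psi(\tau_s\sigma_s)$ is the \emph{diagonal} operator sending $e_{\alpha_s}\mapsto xe_{\alpha_s}$ and $e_\beta\mapsto y^{-\epsilon(s,\beta)}e_\beta$ for $\beta\neq\alpha_s$. Since $\zeta_\beta=\iota_W(w)\,(\tau_s\sigma_s)\,\iota_W(w)^{-1}$ in $VA[\Gamma]$ whenever $\beta=\rho(w)(\alpha_s)$, the operator $\Psi(\zeta_\beta)$ is the conjugate of this diagonal operator by $\Psi(\iota_W(w))$, which is monomial (a product of the monomial operators $\psi(\tau_{s_i})$); a conjugate of a diagonal operator by a monomial operator is again diagonal. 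Hence every $\Psi(\zeta_\beta)$ is diagonal, so $\Psi(PVA[\Gamma])$ is a group of pairwise-commuting operators, and therefore $PVA[\Gamma]'\subseteq\ker\Psi$.

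I expect the main obstacle to lie entirely in stage~(I), and more precisely in the $y$-exponent bookkeeping on $U^{\{s,t\}}$: showing that $\kappa(\widehat w,\beta)$ depends only on the image $w$ of $\widehat w$ in $W[\Gamma]$ is the delicate point, and this is where Lemma~\ref{lemma on representation} and the combinatorics of dihedral root systems do the real work. The rank-two matrix computations on $U_{\{s,t\}}$ are routine but somewhat lengthy, and stage~(II) is the easy part once $\Psi$ is known to exist.
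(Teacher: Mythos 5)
Your proposal is correct and follows essentially the same route as the paper: a relation-by-relation verification on basis vectors in which the case split $U=U_{\{s,t\}}\oplus U^{\{s,t\}}$ coincides with the paper's dichotomy $\beta\in\Pi(w_1)$ versus $\beta\notin\Pi(w_1)$, the delicate point is likewise the exponent identity $\kappa(\widehat{w_1},\beta)=\kappa(\widehat{w_2},\beta)$ handled via Lemma~\ref{lemma on representation} and dihedral root combinatorics, and the second stage likewise rests on $\Psi(\zeta_\beta)$ being diagonal (your ``conjugate of a diagonal operator by a monomial operator'' phrasing is a slightly slicker packaging of the paper's explicit computation). No gaps beyond the computations you explicitly defer.
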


\begin{proof}
To prove that  $\psi$ induces a representation, it remains to check that it preserves all the defining relations of $VA[\Gamma]$.
 \begin{enumerate}
 \item Consider the relation $\operatorname{Prod}_R\left(\sigma_t, \sigma_s, m_{s, t}\right)=\operatorname{Prod}_R\left(\sigma_s, \sigma_t, m_{s, t}\right)$ in $VA[\Gamma]$ for fixed $s, t \in S$ with $s \neq t$ and $m_{s, t} < \infty$.  Let $w=\operatorname{Prod}_R\left(t, s, m_{s, t}\right)=\operatorname{Prod}_R\left(s,t, m_{s, t}\right) \in W[\Gamma]$. Let $\Pi(w)$ be the set of positive roots that are sent to negative roots by $\rho(w)$. For any $\beta \in \Phi^{+}[\Gamma]$, it is easy to check that 
\begin{align*}
\psi(\operatorname{Prod}_R\left(\sigma_t, \sigma_s, m_{s, t}\right))\left(e_\beta\right)&= \begin{cases}
x e_{-\rho(w)(\beta)}  & \text{ if } \beta\in \Pi(w), \\ 
e_{\rho(w)(\beta)} & \text{ otherwise,} 
 \end{cases}\\
 &=\psi(\operatorname{Prod}_R\left(\sigma_s, \sigma_t, m_{s, t}\right))\left(e_\beta\right),
\end{align*}
which is desired.
\item For any $\beta \in \Phi^{+}[\Gamma]$ and $s \in S$, by Lemma \ref{lemma on representation}(1), we have $\epsilon(s,\beta)=-\epsilon (s, \rho(s)(\beta))$. Hence, $\psi(\tau_s^2) = \id$ for all $s \in S$.

\item Next, we consider the relation $\operatorname{Prod}_R\left(\tau_t, \tau_s, m_{s, t}\right)=\operatorname{Prod}_R\left(\tau_s, \tau_t, m_{s, t}\right)$ in $VA[\Gamma]$ for  fixed $s, t \in S$ with $s \neq t$ and $m_{s, t} < \infty$.  Set $m=m_{s, t}$ for convenience. Let $\widehat{w_1}=\operatorname{Prod}_R\left(\tau_t, \tau_s, m \right)$ and $\widehat{w_2}= \operatorname{Prod}_R\left(\tau_s, \tau_t, m\right)$ considered as elements of $F(\mathcal{S} \cup \mathcal{T})$. Further, let $w_1=\operatorname{Prod}_R\left(t,s, m\right)$ and $w_2=\operatorname{Prod}_R\left(s, t, m\right)$ denote  two reduced expressions of the corresponding element in $W[\Gamma]$. Since $\rho(w_1)=\rho(w_2)$, we have $\Pi(w_1)= \Pi(w_2)$. On the other hand,  by Remark~\ref{rmk:pi(w)}, we have $\Pi(w_1)=\{\beta^{w_1}_1, \ldots, \beta^{w_1}_m\}$ and $\Pi(w_2)=\{\beta^{w_2}_1, \ldots, \beta^{w_2}_m\}$. It is easy to verify that $\beta^{w_1}_i=\beta^{w_2}_{m-i+1}$ \cite[Proof of Lemma 2.5]{MR4753773}. In particular, $\beta^{w_1}_m=\alpha_s=\beta^{w_2}_1$ and $\beta^{w_1}_1=\alpha_t=\beta^{w_2}_m$. Let $\beta\in \Phi^{+}[\Gamma]$ be fixed.  For each $0\leq i \leq m-1$, set 
$$\nu^{\beta}_i :=\rho(\operatorname{Prod}_R\left(t,s, i\right))(\beta)~\textrm{and}~ \mu^{\beta}_i:=\rho(\operatorname{Prod}_R\left(s,t, i\right))(\beta).$$ 
Then, we have the following sub-cases:
  \item[Case (3a)] Suppose that $\beta=\beta^{w_1}_i\in \Pi(w_1)$ for some $1 \le i\leq \lceil m/2\rceil $. We write $\widehat{w_1}= \widehat{u_1}  \widehat{v_1}$, where  $\widehat{u_1}$ is the subword of $\widehat{w_1}$ consisting of the first $2i-1$ letters from the left and $\widehat{v_1}=\operatorname{Prod}_R\left(\tau_t, \tau_s, m-2i+1\right)$ is the subword of $\widehat{w_1}$ consisting of the first $m-2i+1$ letters from the right. Further, we can write $\widehat{u_1}=\widehat{x_1}\tau_s \widetilde{x_1} $ or $\widehat{x_1}\tau_t \widetilde{x_1}$,  where $\widehat{x_1}$ is the subword of $\widehat{u_1}$ consisting of the first $i-1$ letters from the left and $\widetilde{x_1}$ is its reverse word. Without loss of generality, we can assume that $\widehat{u_1}=\widehat{x_1}\tau_s \widetilde{x_1}$. Let $w_1=u_1v_1=x_1 s x_1^{-1}v_1$ be the corresponding element in $W[\Gamma]$. Note that $\Pi(v_1)= \{ \beta^{w_1}_{2i}, \ldots, \beta^{w_1}_m\} \subset \Pi(w_1)$.
Hence, it follows that the root $\rho(v_1)(\beta^{w_1}_i)$ is positive. This gives
  \begin{eqnarray*}
   \psi(\widehat{w_1})(e_{\beta}) &=&    \psi(\widehat{u_1} \widehat{v_1})(e_{\beta^{w_1}_i})\\
&=& \psi(\widehat{u_1})(y^{\kappa(\widehat{v_1},\beta^{w_1}_i)}e_{\rho(v_1)(\beta^{w_1}_i)})\\
&=&  y^{\kappa(\widehat{x_1},-\rho(sx_1^{-1}v_1)(\beta^{w_1}_i))} y^{\kappa(\widetilde{x_1},\rho(v_1)(\beta^{w_1}_i))} y^{\kappa(\widehat{v_1},\beta^{w_1}_i)}e_{-\rho(w_1)(\beta^{w_1}_i)}\\
&=& y^{\kappa(\widehat{v_1},\beta^{w_1}_i)}e_{-\rho(w_1)(\beta^{w_1}_i)},
\end{eqnarray*}
where the fourth equality follows from Lemma \ref{lemma on representation}(2) and the fact that $-\rho(sx_1^{-1}v_1)(\beta^{w_1}_i)=\rho(x_1^{-1})\rho(v_1)(\beta^{w_1}_i)=\alpha_s$.
\par

 Similarly, we write $\widehat{w_2}= \widehat{u_2} \widehat{v_2}$, where $\widehat{u_2}$ is the subword of $\widehat{w_2}$ consisting of the first $m-2i+1$ letters from the left and $\widehat{v_2}$ is the subword of $\widehat{w_2}$ consisting of the first $2i-1$ letters from the right.  Since $2i-1$ is odd, the first letter of  $\widehat{u_2}$ from the right is $s$, and hence $\widehat{u_2}=\operatorname{Prod}_R\left(\tau_t, \tau_s, m-2i+1\right)=\widehat{v_1}$. Let $w_2=u_2v_2$ be the corresponding element in $W[\Gamma]$. It is easy to verify that $\rho(v_2)(\beta^{w_1}_i)=-\beta^{w_1}_i=\rho(v_2)(\beta^{w_2}_{m-i+1})$. Thus, we have
\begin{align*}
   \psi(\widehat{w_2})(e_{\beta})&=\psi(\widehat{u_2} \widehat{v_2})(e_{\beta^{w_1}_i})\\
   &= \psi(\widehat{u_2})(e_{-\rho(v_2)(\beta^{w_1}_i)}), \quad \textrm{by argument similar to that of $\psi(\widehat{w_1})(e_\beta)$}\\
   &= y^{\kappa(\widehat{u_2},-\rho(v_2)(\beta^{w_1}_i))}e_{-\rho(w_2)(\beta^{w_1}_i)} \\
 &=y^{\kappa(\widehat{v_1},\beta^{w_1}_i)}e_{-\rho(w_1)(\beta^{w_1}_i)}, \quad \textrm{since $w_1=w_2$ in $W[\Gamma]$}\\
 &=    \psi(\widehat{w_1})(e_{\beta}).   
\end{align*}
\par
\item[Case (3b)]   Suppose that $\beta=\beta^{w_1}_i\in \Pi(w_1)$ for some $\lceil m/2 \rceil \leq i \le m$. In this case, $1 \le m-i+1 \leq \lceil m/2\rceil$. The proof is similar to Case (3a) and follows by interchanging the roles of $\widehat{w_1}$ and $\widehat{w_2}$.
\par

\item[Case (3c)] Suppose that $\beta \not\in \Pi(w_1) =\Pi(w_2)$. Let $\psi(\widehat{w_1})(e_{\beta})=y^{\kappa(\widehat{w_1}, \beta)} e_{\rho(w_1)(\beta)}$ and $\psi(\widehat{w_2})(e_{\beta})=y^{\kappa(\widehat{w_2}, \beta)} e_{\rho(w_2)(\beta)}$. Then, we have
\begin{eqnarray*}
\kappa(\widehat{w_1}, \beta) &=& \epsilon(s,\beta) + \epsilon(t,\rho(s)(\beta))+\epsilon(s,\rho(ts)(\beta))+\epsilon(t,\rho(sts)(\beta))+\cdots\\
&=& \epsilon(s,\nu^\beta_0)+ \epsilon(t,\nu^\beta_1)+\epsilon(s,\nu^\beta_2)+\epsilon(t,\nu^\beta_3)+\cdots +\epsilon(r_1,\nu^\beta_{m-1}),
\end{eqnarray*}
where
$$r_1=\begin{cases} t &\text{ if } m \text{ is even, }\\
s &\text{ if } m \text{ is odd. }
 \end{cases}$$
Similarly,
\begin{eqnarray*}
\kappa(\widehat{w_2}, \beta) &=& \epsilon(t,\beta) + \epsilon(s,\rho(t)(\beta))+\epsilon(t,\rho(st)(\beta))+\epsilon(s,\rho(tst)(\beta))+\cdots\\
&=& \epsilon(t,\mu^\beta_0)+ \epsilon(s,\mu^\beta_1)+\epsilon(t,\mu^\beta_2)+\epsilon(s,\mu^\beta_3)+\cdots +\epsilon(r_2,\mu^\beta_{m-1}),
\end{eqnarray*}
where
$$r_2=\begin{cases} s &\text{ if } m \text{ is even,}\\
t &\text{ if } m \text{ is odd. } 
\end{cases}$$
It is easy to check the following equality \cite[p.197, Claim 1(3)]{MR4753773}
\begin{equation}\label{prod st m-1action on s}
\rho(\text{Prod}_{R}(s,t,m-1))(\alpha_s)=\begin{cases}
\alpha_s & \text{ if } m \text{ is even,}\\
\alpha_t & \text{ if } m \text{ is odd.}
\end{cases}
\end{equation}

Suppose that $m$ is even. We claim that $\epsilon(s,\nu^\beta_k)= \epsilon(s,\mu^\beta_{m-1-k})$ and $\epsilon(t,\nu^\beta_j)= \epsilon(t,\mu^\beta_{m-1-j})$ for $k$ even and $j$ odd. Indeed, using \eqref{prod st m-1action on s} and invariance of the bilinear form, we have
\begin{eqnarray*}
\langle \nu^\beta_k, \alpha_s \rangle &=& \langle \rho(\operatorname{Prod}_R\left(t,s, k\right))(\beta) , \alpha_s \rangle\\
&=& \langle \rho(\text{Prod}_{R}(s,t,m-1)~ \operatorname{Prod}_R\left(t,s, k\right))(\beta), \alpha_s \rangle\\
&=& \langle \rho(\operatorname{Prod}_R\left(s,t, m-1-k\right))(\beta) , \alpha_s \rangle\\
&=& \langle \mu^\beta_{m-1-k}, \alpha_s \rangle
\end{eqnarray*}
and
\begin{eqnarray*}
\langle \nu^\beta_{j}, \alpha_t \rangle &=&
\langle \rho(\operatorname{Prod}_R\left(t,s, j\right))(\beta) , \alpha_t \rangle\\ 
&=& \langle \rho(\text{Prod}_{R}(t, s,m-1)~ \operatorname{Prod}_R\left(t,s, j\right))(\beta), \alpha_t \rangle\\
&=& \langle \rho(\operatorname{Prod}_R\left(s,t, m-1-j\right))(\beta) , \alpha_t \rangle\\
&=& \langle \mu^\beta_{m-1-j}, \alpha_t \rangle.
\end{eqnarray*}
This gives $\epsilon(s,\nu^\beta_k)= \epsilon(s,\mu^\beta_{m-1-k})$ and $\epsilon(t,\nu^\beta_j)= \epsilon(t,\mu^\beta_{m-1-j})$, and hence $\kappa(\widehat{w_1}, \beta)=\kappa(\widehat{w_2}, \beta)$.
\par
Now, suppose that $m$ is odd. In this case, we claim that $\epsilon(s,\nu^\beta_k)= \epsilon(t,\mu^\beta_{m-1-k})$ and $\epsilon(t,\nu^\beta_j)= \epsilon(s,\mu^\beta_{m-1-j})$ for $k$ even and $j$ odd. As before, we have
\begin{eqnarray*}
\langle \nu^\beta_{k}, \alpha_s \rangle &=&
\langle \rho(\operatorname{Prod}_R\left(t,s, k\right))(\beta) , \alpha_s \rangle\\
&=& \langle \rho(\text{Prod}_{R}(s, t,m-1)~ \operatorname{Prod}_R\left(t,s, k\right))(\beta), \alpha_t \rangle\\
&=& \langle \rho(\operatorname{Prod}_R\left(s,t, m-1-k\right))(\beta) , \alpha_t \rangle\\
&=& \langle \mu^\beta_{m-1-k}, \alpha_t \rangle
\end{eqnarray*}
and 
\begin{eqnarray*}
\langle \nu^\beta_{j}, \alpha_t \rangle &=&
\langle \rho(\operatorname{Prod}_R\left(t,s, j\right))(\beta) , \alpha_t \rangle \\ 
&=& \langle \rho(\text{Prod}_{R}(t,s,m-1)~ \operatorname{Prod}_R\left(t,s, j\right))(\beta), \alpha_t \rangle\\
&=& \langle \rho(\operatorname{Prod}_R\left(s,t, m-1-j\right))(\beta) , \alpha_s \rangle\\
&=& \langle \mu^\beta_{m-1-j}, \alpha_s \rangle.
\end{eqnarray*}
The preceding equations imply that $\epsilon(s,\nu^\beta_k)= \epsilon(t,\mu^\beta_{m-1-k})$ and $\epsilon(t,\nu^\beta_j)= \epsilon(s,\mu^\beta_{m-1-j})$, and hence  $\kappa(\widehat{w_1}, \beta)=\kappa(\widehat{w_2}, \beta)$. 
\par

\item Finally, we consider the mixed relation $\text{Prod}_{R}(\tau_s,\tau_t,m_{s,t}-1)\sigma_s=\sigma_r\text{Prod}_{R}(\tau_s,\tau_t,m_{s,t}-1)$ for fixed $s,t\in S$ with $s\neq t$ and $m_{s,t} < \infty$, where $r=s $ if $m_{s,t}$ is even and $r=t $ if $m_{s,t}$ is odd. Set $m=m_{s, t}$ for convenience. Let $\widehat{w_1}=\operatorname{Prod}_R\left(\tau_t, \tau_s, m\right)$, $\widehat{w_2}= \operatorname{Prod}_R\left(\tau_s, \tau_t, m\right)$ and $\widehat{u}=\operatorname{Prod}_R\left(\tau_s,\tau_t, m-1\right)$. Then $\widehat{w_1}=\widehat{u}\; \tau_s$ and $\widehat{w_2}=\tau_r \; \widehat{u}$. Let $w_1=\operatorname{Prod}_R\left(t,s, m\right)$ and $w_2=\operatorname{Prod}_R\left(s, t, m\right)$ denote the corresponding element in $W[\Gamma]$. Then, we have
\begin{eqnarray*}
\psi( \text{Prod}_{R}(\tau_s,\tau_t,m-1)\sigma_s)(e_\beta) &=& \psi(\widehat{u}\sigma_s)(e_\beta)\\
&=& \begin{cases} y^{\kappa(\widehat{u}, \beta)} x e_{-\rho(w_1)(\beta)} & ~\text{if}~\beta= \beta_m^{w_1}=\alpha_s,\\
y^{\kappa(\widehat{u}, \rho(s)(\beta))}  e_{-\rho(w_1)(\beta)} &~\text{if}~\beta=\beta_i^{w_1}~\text{for}~i < m,\\
y^{\kappa(\widehat{u}, \rho(s)(\beta))}  e_{\rho(w_1)(\beta)} &~\text{if}~ \beta \not\in \Pi(w_1),
 \end{cases}
\end{eqnarray*}
and
\begin{eqnarray*}
\psi(\sigma_r\text{Prod}_{R}(\tau_s,\tau_t,m-1))(e_\beta) &=& \psi(\sigma_r\widehat{u})(e_\beta)\\
&= & \psi(\sigma_r)\left(\begin{cases}
y^{\kappa(\widehat{u}, \beta)} e_{\alpha_r} & \text{ if } \beta=\beta_m^{w_1}=\beta_1^{w_2}=\alpha_s,\\
y^{\kappa(\widehat{u}, \beta)}  e_{-\rho(u)(\beta)} & ~\text{if}~\beta=\beta_i^{w_1}=\beta_{m-i+1}^{w_2}~\text{for}~i < m,\\
y^{\kappa(\widehat{u}, \beta)}  e_{\rho(u)(\beta)} & ~\text{if}~ \beta \not\in \Pi(w_1)=\Pi(w_2),
\end{cases}\right)\\
&=& \begin{cases}
y^{\kappa(\widehat{u}, \beta)} x e_{-\rho(w_2)(\beta)} & \text{ if } \beta=\beta_m^{w_1}=\beta_1^{w_2}=\alpha_s,\\
y^{\kappa(\widehat{u}, \beta)}  e_{-\rho(w_2)(\beta)} & ~\text{if}~\beta=\beta_i^{w_1}=\beta_{m-i+1}^{w_2}~\text{for}~i < m,\\
y^{\kappa(\widehat{u}, \beta)}  e_{\rho(w_2)(\beta)}& ~\text{if}~ \beta \not\in \Pi(w_1)=\Pi(w_2).
\end{cases}
\end{eqnarray*}
Recalling notation from Case (3), for each $0\leq i \leq m$, let $\nu^\beta_i=\rho(\operatorname{Prod}_R\left(t,s, i\right))(\beta)$ and 
$\mu^\beta_i=\rho(\operatorname{Prod}_R\left(s,t, i\right))(\beta)$. Since $\kappa(\widehat{w_1}, \beta)= \kappa(\widehat{w_2}, \beta)$, it follows that
$$ \kappa(\widehat{u}, \rho(s)(\beta))= \kappa(\widehat{w_1}, \beta)-\epsilon(s,\beta)= \kappa(\widehat{w_1}, \beta)-\epsilon(s,\nu^\beta_0)=
\kappa(\widehat{w_2}, \beta)-\epsilon(r,\mu^\beta_{m-1})= \kappa(\widehat{u}, \beta).
$$
\end{enumerate}
Thus, $\psi$ induces a group homomorphism, say, $\Psi:VA[\Gamma]\to \GL(U)$.
\medskip

We now prove the second assertion. Since $\Psi$ is a group homomorphism, for each $w\in \langle \tau_s \mid s\in S \rangle\subset VA[\Gamma]$ and $\beta \in \Phi^{+}[\Gamma]$, the expression \eqref{kappa def} takes the form
\begin{equation}\label{kappa def revised}
\Psi(w)(e_\beta)=\begin{cases} y^{\kappa(w, \beta)} e_{-\rho(w)(\beta)}  & \text{ if } \beta\in \Pi(w), \\ 
y^{\kappa(w, \beta)} e_{\rho(w)(\beta)} & \text{ otherwise},\end{cases}
\end{equation}
where $\kappa(w, \beta) \in \mathbb Z$ is independent of the choice of the word representing $w$. Using the fact that $\Psi$ is a homomorphism, we get
\begin{equation}\label{kappa w inverse relation}
\kappa(w, \beta) = - \kappa(w^{-1}, \pm \rho(w)(\beta))
\end{equation} 
and
$$\Psi(\tau_s\sigma_s)(e_\beta)=\begin{cases}
xe_\beta & \text{ if } \beta=\alpha_s,\\
y^{\epsilon(s, \rho(s)(\beta))}e_\beta & \text{ otherwise, }
\end{cases} \quad \quad \text{and} \quad \quad \Psi(\sigma_s\tau_s)(e_\beta)=\begin{cases}
xe_\beta & \text{ if } \beta=\alpha_s,\\
y^{\epsilon(s, \beta)}e_\beta & \text{ otherwise. }
\end{cases}
$$
For simplicity of notation, for an element $w \in W[\Gamma]$, we denote its image $\iota_W(w)$ by $w$ itself. For each $\delta\in\Phi[\Gamma]$, let $\zeta_\delta=w\cdot(\tau_s\sigma_s)=\iota_W(w) (\tau_s\sigma_s)\iota_W(w^{-1})= w (\tau_s\sigma_s) w^{-1}\in PVA[\Gamma]$, where  $w\in W[\Gamma]$ and $s\in S$ are such that $\delta=\rho(w)(\alpha_s)$. Then, for each $\beta \in \Phi^{+}[\Gamma]$, using \eqref{kappa def revised} and \eqref{kappa w inverse relation}, we obtain
\begin{align*}
\Psi(\zeta_\delta)(e_\beta)
&= \Psi(w)\Psi(\tau_s\sigma_s) \Psi(w^{-1})(e_\beta) \\
&= \Psi(w)\Psi(\tau_s\sigma_s)(y^{\kappa(w^{-1},\beta)}e_{\pm \rho(w^{-1})(\beta)})\\
&=\Psi(w)\left(\begin{cases} 
y^{\kappa(w^{-1},\beta)} x e_{\pm \rho(w^{-1})(\beta)} & \text{ if } \pm \rho(w^{-1})(\beta)=\alpha_s,\\
&\\
y^{\kappa(w^{-1},\beta)} y^{\epsilon(s, \pm\rho(sw^{-1})(\beta))}e_{\pm \rho(w^{-1})(\beta)} & \text{ otherwise, }
\end{cases}\right)\\
&= \begin{cases} x y^{\kappa(w^{-1},\beta)} y^{\kappa(w,\pm \rho(w^{-1})(\beta))} e_{\beta} & \text{ if } \pm \rho(w^{-1})(\beta)=\alpha_s,\\
&\\
y^{\kappa(w^{-1},\beta)} y^{\epsilon(s, \pm\rho(sw^{-1})(\beta))} y^{\kappa(w,\pm \rho(w^{-1})(\beta))} e_{\beta} & \text{ otherwise, } 
\end{cases}\\
&= \begin{cases} 
x e_{\beta} & \text{ if } \pm \rho(w^{-1})(\beta)=\alpha_s,\\
 y^{\epsilon(s, \pm\rho(sw^{-1})(\beta))}  e_{\beta} & \text{ otherwise,}
 \end{cases}\\
 &= \begin{cases} 
x e_{\beta} & \text{ if }  \beta=\pm \delta,\\
 y^{\epsilon(s, \pm\rho(sw^{-1})(\beta))}  e_{\beta} & \text{ otherwise.}
 \end{cases}
\end{align*}
\par
Fixing an ordering on the basis $\{e_\beta \mid \beta \in \Phi^{+}[\Gamma]\}$ of $U$, we see that the matrix representation of $\Psi(\zeta_\delta)$ is the diagonal matrix. Since diagonal matrices commute, it follows from Theorem \ref{prop:presentationpva} that the commutator subgroup  $PVA[\Gamma]'$ of $PVA[\Gamma]$ is contained in $\ker(\Psi)$. This completes the proof of the theorem.
\end{proof}

\begin{remark}
The representation $\Psi:VA[\Gamma] \to \GL(U)$  is finite-dimensional if and only if $VA[\Gamma]$  is spherical. In view of \cite[Corollary 2.4]{MR4753773}, the Artin group $A[\Gamma]$ can be viewed as a subgroup of $VA[\Gamma]$. Further, the restriction of $\Psi$ to $A[\Gamma]$ agrees with the specialisation at $q=1$ of the Krammer's representation \cite{MR1888796} for Artin group of type $A_m$ for $m \geq 1$, and the representation given in \cite{MR1942303, MR2004479} for Artin group of type $E_6$, $E_7$, $E_8$ and $D_m$ for $m \geq 4$.
\end{remark}

Consider the split  exact sequence
\begin{equation*}
\begin{tikzcd}
1 \arrow[r] & PVA[\Gamma] \arrow[r] \arrow[r] & VA[\Gamma] \arrow[r, , "\pi_P"] & {W[\Gamma]} \arrow[r] \arrow[l,  "\iota", bend left] & 1
\end{tikzcd}
\end{equation*}
which, in turn, induces the split  exact sequence

\begin{equation}\label{splitting for crystallographic}
\begin{tikzcd}
1 \arrow[r] & {PVA[\Gamma]/PVA[\Gamma]'} \arrow[r] \arrow[r] & {VA[\Gamma]/PVA[\Gamma]'} \arrow[r, , "\overline{\pi}_P"] & {W[\Gamma]} \arrow[r] \arrow[l,  "\overline{\iota}", bend left] & 1.
\end{tikzcd}
\end{equation}

\begin{proposition}\label{prop:freeabelian}
Let $VA[\Gamma]$ be the virtual Artin group associated with a Coxeter graph $\Gamma$, and let $\Phi[\Gamma]$ be it root system. Then the group $PVA[\Gamma]/PVA[\Gamma]'$ is a free abelian group of rank $|\Phi[\Gamma]|$.
\end{proposition}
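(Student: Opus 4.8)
The plan is to compute $PVA[\Gamma]/PVA[\Gamma]'$ directly from the presentation of $PVA[\Gamma]$ recorded in Theorem~\ref{prop:presentationpva}. By Theorem~\ref{linear rep VAG} we know $PVA[\Gamma]' \subseteq \ker(\Psi)$, so $\Psi$ descends to a representation of $PVA[\Gamma]/PVA[\Gamma]'$ by diagonal (hence commuting) matrices; this is consistent with the assertion, but the precise rank will be extracted from the presentation rather than from $\Psi$.

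I would use the standard fact that a group presented as $\langle X \mid R \rangle$ has abelianization $\mathbb{Z}^{(X)}/K$, where $\mathbb{Z}^{(X)}$ is the free abelian group on $X$ and $K$ is generated by the images in $\mathbb{Z}^{(X)}$ of the relators $r \in R$ (conjugating a relator does not change this image, so the passage to the normal closure causes no trouble). By Theorem~\ref{prop:presentationpva}, $PVA[\Gamma]$ is presented with generating set $\{\zeta_\beta \mid \beta \in \Phi[\Gamma]\}$, which is in bijection with $\Phi[\Gamma]$, and with defining relators $Z(\gamma,\beta,\widehat m_{\beta,\gamma})\,Z(\beta,\gamma,\widehat m_{\beta,\gamma})^{-1}$, one for each $\beta,\gamma \in \Phi[\Gamma]$ with $\beta \neq \gamma$ and $\widehat m_{\beta,\gamma} < \infty$.

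The crucial observation is the one recorded immediately after \eqref{eq:relPVA}: the words $Z(\gamma,\beta,\widehat m_{\beta,\gamma}) = \zeta_{\beta_m}\cdots\zeta_{\beta_2}\zeta_{\beta_1}$ and $Z(\beta,\gamma,\widehat m_{\beta,\gamma}) = \zeta_{\beta_1}\zeta_{\beta_2}\cdots\zeta_{\beta_m}$ are reverses of one another in the same letters $\zeta_{\beta_1},\dots,\zeta_{\beta_m}$. Consequently each relator $Z(\gamma,\beta,\widehat m_{\beta,\gamma})\,Z(\beta,\gamma,\widehat m_{\beta,\gamma})^{-1}$ has total exponent $0$ in every generator $\zeta_\delta$, so it maps to $0$ in $\mathbb{Z}^{(\Phi[\Gamma])}$; hence $K = 0$. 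Therefore the canonical surjection $\mathbb{Z}^{(\Phi[\Gamma])} \twoheadrightarrow PVA[\Gamma]/PVA[\Gamma]'$ sending the basis vector indexed by $\beta$ to the class of $\zeta_\beta$ is an isomorphism, and $PVA[\Gamma]/PVA[\Gamma]'$ is free abelian with a basis in bijection with $\Phi[\Gamma]$. When $W[\Gamma]$ is finite, $\Phi[\Gamma]$ is finite by Deodhar's theorem, so this rank is the number $|\Phi[\Gamma]|$ in the usual sense; in general it is the cardinality of $\Phi[\Gamma]$.

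I do not expect a substantive obstacle. The whole argument rests on the palindromic shape of the defining relations of $PVA[\Gamma]$, which is built into Theorem~\ref{prop:presentationpva}. It is worth noting that one cannot shortcut the rank computation using $\Psi$ alone, since $\Psi(\zeta_\beta)$ and $\Psi(\zeta_{-\beta})$ can coincide (for instance when $\Gamma$ is a single vertex), so $\Psi$ by itself would undercount the rank; the presentation is exactly what is needed to see that the $\zeta_\beta$ remain independent after abelianizing.
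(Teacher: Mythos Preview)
Your argument is correct and begins exactly as the paper's does: abelianize the presentation of Theorem~\ref{prop:presentationpva} and observe that every relator, being of the form $(\text{word})\cdot(\text{reverse of that word})^{-1}$, has total exponent zero in each generator and hence dies in $\mathbb{Z}^{(\Phi[\Gamma])}$. The paper phrases this step as ``the defining relations become trivial as a consequence of commutation.''

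Where you diverge is that you stop here, while the paper adds a further step: it verifies directly that the classes $\overline{\zeta}_\beta$ are pairwise distinct in the abelianization, using $\Psi$ to separate $\zeta_\beta$ from $\zeta_\gamma$ when $\gamma \neq \pm\beta$, and invoking a computation from \cite{MR4753773} for the remaining case $\gamma = -\beta$. Your observation that this step is redundant is well taken: once Theorem~\ref{prop:presentationpva} is read as a presentation with one formal generator per root $\beta \in \Phi[\Gamma]$, the abelianization is tautologically free on those symbols, so distinctness of the $\overline{\zeta}_\beta$ is a consequence rather than something to be checked separately. Your side remark that $\Psi(\zeta_\beta)$ and $\Psi(\zeta_{-\beta})$ can coincide (e.g.\ for a single-vertex $\Gamma$) is also on point---it is exactly why the paper must reach outside $\Psi$ for the case $\gamma = -\beta$. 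Your route is the more economical of the two; the paper's extra verification can be read as a sanity check confirming that the formal generators of the presentation really do map to distinct elements of $PVA[\Gamma]$.
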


\begin{proof}
In view of Theorem \ref{prop:presentationpva}, the images of elements from the set $\{\zeta_\beta  \mid \beta\in\Phi[\Gamma]\}$ generate the quotient $PVA[\Gamma]/PVA[\Gamma]'$. Further, the defining relations stated in Theorem \ref{prop:presentationpva} become trivial as a consequence of commutation, and hence $PVA[\Gamma]/PVA[\Gamma]'$ is a free abelian group. 
\par
We claim that $\zeta_\beta\ne \zeta_\gamma \mod PVA[\Gamma]'$ for any $\beta \neq \gamma \in \Phi[\Gamma]$. Suppose that $\gamma \in \Phi[\Gamma]\setminus \{\pm \beta\}$. Then we see that  $\Psi(\zeta_{\beta})(e_{\pm \beta})=xe_{\pm \beta}$, whereas $\Psi(\zeta_{\gamma})(e_{\pm \beta}) \ne xe_{\pm \beta}$. This implies that  $\Psi(\zeta_{\beta}) \neq  \Psi(\zeta_{\gamma})$, and hence by Theorem \ref{linear rep VAG}, we have $\zeta_\beta\ne \zeta_\gamma \mod PVA[\Gamma]'$. Now, suppose that $\gamma=-\beta$. Let $w\in W[\Gamma]$ and $s \in S$ be such that $\rho(w)(\alpha_s)=\beta$. Then, we see that $\zeta_{\beta}= \iota_W(w)(\tau_s\sigma_s)\iota_W(w)^{-1}$ and $\zeta_{-\beta}= \iota_W(w)(\sigma_s\tau_s)\iota_W(w)^{-1}$. Since  $\tau_s\sigma_s\tau_s\sigma_s^{-1}\neq 1$ in $VA[\Gamma]$ \cite[Proof of Corollary 3.4]{MR4753773}, we get
$$
\zeta_{\beta}\zeta_{-\beta}^{-1}= \iota_W(w)(\tau_s\sigma_s\tau_s \sigma_s^{-1})\iota_W(w)^{-1} \neq 1,
$$
which completes the proof.
\end{proof}

\begin{definition}
Let $\Upsilon$ be a simple graph with vertex set $V(\Upsilon)$ and edge set $E(\Upsilon)$. Let $G_v$ be a group assigned to a vertex $v \in V(\Upsilon)$. Then the graph product $G(\Upsilon)$ is defined to be the quotient of the free product of $*_{v \in V(\Upsilon)} \,G_v$ by the  set of relations
$$
\big\{ [G_v, G_w]=1 \mid  \text{whenever}~ (v,w) \in E(\Upsilon) \big\}.
$$
\end{definition}
\begin{remark}
It is easy to see that each right-angled virtual Artin group is a graph product of groups with each vertex group being either  $\mathbb Z$ or $\mathbb Z_2$. Hsu and Wise proved in \cite{MR1704150} that a graph product of subgroups of  Coxeter groups can be embedded as a subgroup of another Coxeter group, and hence such a graph product is linear.
\end{remark}

Since right-angled virtual Artin groups are linear, the following seems a natural question.

\begin{question}
Can we classify the virtual Artin groups that admit faithful linear representations?
\end{question}
\medskip

\section{Crystallographic quotient of $VA[\Gamma]$}\label{sec crystallographic quotient}

 A closed subgroup $H$ of a Hausdorff topological group $G$ is called {\it uniform} if $G/H$ is a compact space.  A discrete and uniform subgroup $G$ of $~\mathbb{R}^n \rtimes \Oo(n, \mathbb{R})$ is called a {\it crystallographic group} of dimension $n$. In addition, if $G$ is torsion-free, then it is called a {\it Bieberbach group} of dimension $n$. The following characterisation of crystallographic groups is well-known \cite[Lemma 8]{MR3595797}.

\begin{lemma}
 A group $G$ is a crystallographic group if and only if there is an integer $n$, a finite group $H$ and a short exact sequence
$$
0 \longrightarrow \mathbb{Z}^n \longrightarrow G \stackrel{\eta}{\longrightarrow} H \longrightarrow 1
$$
such that the integral representation $\Theta : H \longrightarrow \GL(n, \mathbb{Z})$, defined by $\Theta(h)(x)=z x z^{-1}$, is faithful. Here,  $h \in H$, $x \in \mathbb{Z}^n$ and $z \in G$ is such that $\eta(z)=h$. 
\end{lemma}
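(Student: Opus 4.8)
The statement is a classical reformulation of Bieberbach's first theorem, so the plan is to prove the two implications separately: first that a discrete uniform subgroup of $\mathbb{R}^n\rtimes\Oo(n,\mathbb{R})$ fits into such a sequence with $\Theta$ faithful, and then that any such sequence is realized by a discrete uniform group of Euclidean isometries.

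\textbf{Necessity.} Suppose $G\le \mathbb{R}^n\rtimes\Oo(n,\mathbb{R})$ is discrete and uniform. I would set $L:=G\cap\mathbb{R}^n$, the subgroup of pure translations. By the first Bieberbach theorem, $L$ is a lattice of full rank $n$ in $\mathbb{R}^n$ (so $L\cong\mathbb{Z}^n$) and has finite index in $G$; writing $H:=G/L$ and letting $\eta:G\to H$ be the projection yields the short exact sequence $0\to\mathbb{Z}^n\to G\xrightarrow{\eta}H\to 1$ with $H$ finite. Since $L$ is abelian, the conjugation action of $G$ on $L$ is trivial on $L$ itself and descends to $\Theta:H\to\Aut(L)=\GL(n,\mathbb{Z})$, which is exactly the map in the statement. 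To see $\Theta$ is faithful I would show $C_G(L)=L$: if $g=(v,A)$ with $A\in\Oo(n)$ commutes with every translation $(t,\mathrm{Id})$, $t\in L$, then $At=t$ for all $t\in L$, and since $L$ spans $\mathbb{R}^n$ this forces $A=\mathrm{Id}$, whence $g\in G\cap\mathbb{R}^n=L$. Consequently, if $\Theta(h)=\mathrm{Id}$ and $z\in G$ satisfies $\eta(z)=h$, then $z$ centralizes $L$, so $z\in L$ and $h=\eta(z)=1$.

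\textbf{Sufficiency.} Conversely, given $0\to\mathbb{Z}^n\to G\xrightarrow{\eta}H\to 1$ with $H$ finite and $\Theta$ faithful, the plan is to embed $G$ into $\mathbb{R}^n\rtimes\Oo(n,\mathbb{R})$. First, average the standard inner product of $\mathbb{R}^n=\mathbb{Z}^n\otimes_{\mathbb{Z}}\mathbb{R}$ over the finite group $\Theta(H)$; choosing an orthonormal basis for the resulting $H$-invariant form identifies $\Theta(H)$ with a finite subgroup of $\Oo(n,\mathbb{R})$ (and carries $\mathbb{Z}^n$ to some full-rank lattice in $\mathbb{R}^n$). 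Next, regard $G$ as an extension of $H$ by the $H$-module $\mathbb{Z}^n$, with class $[\omega]\in H^2(H;\mathbb{Z}^n)$; the inclusion of $H$-modules $\mathbb{Z}^n\hookrightarrow\mathbb{R}^n$ pushes $[\omega]$ into $H^2(H;\mathbb{R}^n)$, which vanishes because $H$ is finite and $\mathbb{R}^n$ is uniquely divisible. Hence the push-out extension $1\to\mathbb{R}^n\to\mathbb{R}^n\rtimes_\Theta H\to H\to 1$ is split, and the universal property of the push-out gives a commutative ladder with top row the given sequence, bottom row this split extension, left vertical map the inclusion $\mathbb{Z}^n\hookrightarrow\mathbb{R}^n$, and right vertical map the identity of $H$; the five lemma then forces the induced middle map $G\to\mathbb{R}^n\rtimes_\Theta H$ to be injective. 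This realizes $G$ inside $\mathbb{R}^n\rtimes_\Theta H\subseteq\mathbb{R}^n\rtimes\Oo(n,\mathbb{R})$. Finally I would verify discreteness and cocompactness of the image: faithfulness of $\Theta$ shows any element of $G$ mapping to an isometry with trivial rotation part lies in $\ker\eta=\mathbb{Z}^n$, so $G\cap\mathbb{R}^n$ is precisely the lattice $\mathbb{Z}^n$; combining its discreteness in $\mathbb{R}^n$ with the finiteness (hence discreteness) of $\Theta(H)$ in $\Oo(n)$, a short neighborhood-of-identity argument yields discreteness of $G$, while $(\mathbb{R}^n\rtimes\Oo(n))/G$ is a continuous image of the compact space $(\mathbb{R}^n/\mathbb{Z}^n)\rtimes\Oo(n)$ and is therefore compact.

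\textbf{Main obstacle.} The single substantive input is the first Bieberbach theorem invoked in the necessity direction (that the translation subgroup of a discrete uniform group of Euclidean isometries is a full-rank lattice of finite index); I would cite it rather than reprove it. Everything else is routine: in the sufficiency direction the only mildly delicate steps are the vanishing of the real extension class and the injectivity of the push-out map, both standard group-cohomology bookkeeping, together with the averaging argument and the elementary point-set verifications of discreteness and cocompactness.
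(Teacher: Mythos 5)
Your argument is correct and complete; the paper itself gives no proof of this lemma, citing it as well known from Gon\c calves--Guaschi--Ocampo, and your two implications (Bieberbach's first theorem plus the computation $C_G(L)=L$ for necessity; averaging the inner product, the vanishing of $H^2(H;\mathbb{R}^n)$, and the five lemma for sufficiency) are exactly the standard proof underlying that citation. The only cosmetic point is that $(\mathbb{R}^n/\mathbb{Z}^n)\rtimes\Oo(n)$ is not literally a group, since $\Oo(n)$ need not preserve $\mathbb{Z}^n$; what you mean, and what suffices, is that the image of the compact set $D\times\Oo(n)$, with $D$ a compact fundamental domain for $\mathbb{Z}^n$ acting by left translations, covers the quotient.
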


The group $H$ is referred to as the {\it holonomy group} of $G$, the integer $n$ is known as the {\it dimension} of $G$, and $\Theta$ is termed the {\it holonomy representation} of $G$. It is well-known that any finite group is the holonomy group of some flat manifold \cite[Theorem III.5.2]{MR0862114}. Also, there is a correspondence between the class of Bieberbach groups and the class of compact flat Riemannian manifolds \cite[Theorem 2.1.1]{MR1482520}. Virtual braid groups, virtual twin groups and virtual triplet groups are three canonical extensions of symmetric groups that are motivated by virtual knot theories. Crystallographic quotients of these families of groups have been considered in \cite[Theorem 2.4 and Theorem 3.5]{MR4607569} and \cite[Theorem 5.4]{MR4750162}. Regarding virtual Artin groups, we derive the following result.

\begin{theorem}\label{thm crystallographic SVAG}
Let $W[\Gamma]$ be a spherical Coxeter group, and let $\Phi[\Gamma]$ denote its root system, with $|\Phi[\Gamma]|=n$. Then there is a split exact sequence
$$
1 \to \mathbb{Z}^{n} \to VA[\Gamma]/PVA[\Gamma]' \to  W[\Gamma] \to 1
$$
such that the group $VA[\Gamma]/PVA[\Gamma]'$ is a crystallographic group of dimension $n$ with holonomy group $W[\Gamma]$.
\end{theorem}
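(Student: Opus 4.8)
The plan is to verify the hypotheses of the Lemma characterising crystallographic groups, using the split exact sequence \eqref{splitting for crystallographic} as the backbone. Since $W[\Gamma]$ is spherical, its root system $\Phi[\Gamma]$ is finite by Deodhar's theorem, say $|\Phi[\Gamma]| = n$. By Proposition \ref{prop:freeabelian}, $PVA[\Gamma]/PVA[\Gamma]'$ is free abelian of rank $n$, so \eqref{splitting for crystallographic} is precisely a split extension
$$
1 \to \mathbb{Z}^n \to VA[\Gamma]/PVA[\Gamma]' \xrightarrow{\overline{\pi}_P} W[\Gamma] \to 1,
$$
with $W[\Gamma]$ finite. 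It remains only to check that the induced holonomy representation $\Theta : W[\Gamma] \to \GL(n,\mathbb{Z})$ is faithful; splitness is already built in via $\overline{\iota}$, and the exactness gives $\Theta$ as the conjugation action of $W[\Gamma]$ on $\mathbb{Z}^n$.

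First I would identify $\Theta$ explicitly. The free abelian group $\mathbb{Z}^n$ has basis the images $\overline{\zeta_\beta}$, $\beta \in \Phi[\Gamma]$, of the generators from Theorem \ref{prop:presentationpva}. By Remark \ref{rmk:action}, the conjugation action of $w \in W[\Gamma]$ (lifted via $\overline{\iota}$) sends $\overline{\zeta_\beta}$ to $\overline{\zeta_{\rho(w)(\beta)}}$. Thus $\Theta$ is nothing but the permutation representation of $W[\Gamma]$ on the set $\Phi[\Gamma]$, realised on the lattice $\mathbb{Z}[\Phi[\Gamma]]$.

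Next I would prove faithfulness of this permutation representation. Suppose $w \in W[\Gamma]$ acts trivially on $\mathbb{Z}[\Phi[\Gamma]]$; equivalently, $\rho(w)$ fixes every root $\beta \in \Phi[\Gamma]$ setwise-pointwise, i.e.\ $\rho(w)(\beta) = \beta$ for all $\beta$. In particular $\rho(w)(\alpha_s) = \alpha_s$ for every simple root $\alpha_s$, and since $\Pi = \{\alpha_s \mid s \in S\}$ is a basis of $V$, this forces $\rho(w) = \id_V$. Because the Tits representation $\rho : W[\Gamma] \hookrightarrow \GL(V)$ is faithful, $w = 1$. (One should note the mild subtlety that $\Phi[\Gamma]$ is a spanning set containing a basis, so pointwise fixing all of $\Phi[\Gamma]$ already fixes a basis — no finer analysis is needed.) Hence $\Theta$ is injective, the Lemma applies, and $VA[\Gamma]/PVA[\Gamma]'$ is a crystallographic group of dimension $n$ with holonomy group $W[\Gamma]$.

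The only genuinely substantive inputs are Proposition \ref{prop:freeabelian} (already established via the linear representation $\Psi$) for the rank, and Remark \ref{rmk:action} for the precise form of the action; given these, the argument is essentially a short unwinding of definitions. I do not expect a serious obstacle here — the main work was done in constructing $\Psi$ and computing $PVA[\Gamma]/PVA[\Gamma]'$; what remains is the observation that the holonomy action is a root-permutation action whose faithfulness is inherited directly from the faithfulness of the Tits representation.
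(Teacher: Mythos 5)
Your proposal is correct and follows essentially the same route as the paper: invoke Proposition \ref{prop:freeabelian} and the split sequence \eqref{splitting for crystallographic} to get the extension $1 \to \mathbb{Z}^n \to VA[\Gamma]/PVA[\Gamma]' \to W[\Gamma] \to 1$, identify the holonomy action via Remark \ref{rmk:action} as the permutation action on the $\overline{\zeta}_\beta$, and deduce faithfulness from the faithfulness of the Tits representation. Your added remark that fixing all roots already fixes the simple roots (a basis of $V$) is a harmless explicit elaboration of the paper's appeal to the faithfulness of $\rho$.
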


\begin{proof}
By Proposition \ref{prop:freeabelian} and sequence \eqref{splitting for crystallographic}, we obtain the split exact sequence
$$
1 \to \mathbb{Z}^n \to VA[\Gamma]/PVA[\Gamma]' \to  W[\Gamma] \to 1,
$$
where $\mathbb{Z}^n \cong PVA[\Gamma]/PVA[\Gamma]'$.  Let $\Theta: W[\Gamma] \to \GL(n, \mathbb{Z})$ be the natural action of $W[\Gamma]$ on $\mathbb{Z}^n$, which is induced by the action given in Remark \ref{rmk:action}. Suppose that $\Theta(w)=\id$ for some $w \in W[\Gamma]$. But, $\zeta_{\rho(w)(\beta)} = \zeta_\beta \mod PVA[\Gamma]'$ for all $\beta \in \Phi[\Gamma]$ if and only if  $\rho(w)(\beta)=\beta$ for all $\beta \in \Phi[\Gamma]$. Since $\rho$ is faithful, it follows that $w=1$. Hence, the holonomy representation $\Theta$ is faithful, and $VA[\Gamma]/PVA[\Gamma]'$ is a crystallographic group of dimension $n$.
\end{proof}

\subsection{Torsion in the crystallographic quotient of $VA[\Gamma]$}
Next, we identify torsion elements in the crystallographic quotient $VA[\Gamma] /PVA[\Gamma]'$ when $VA[\Gamma]$ is spherical. By \eqref{splitting for crystallographic}, we have $VA[\Gamma] /PVA[\Gamma]' \cong \big(PVA[\Gamma]/PVA[\Gamma]'\big) \rtimes W[\Gamma]$. Thus, any $w \in VA[\Gamma]/PVA[\Gamma]'$ can be written uniquely in the form $w= (\prod_{\beta \in \Phi[\Gamma]} \overline{\zeta}_{\beta}^{\,a_\beta})\theta$, where $\prod_{\beta \in \Phi[\Gamma]} \overline{\zeta}_{\beta}^{\, a_\beta} \in PVA[\Gamma] /PVA[\Gamma]'$, $a_\beta \in \mathbb Z$ and $\theta \in W[\Gamma]$. The element $w$ acts on the set $\{\zeta_\beta ~|~ \beta \in \Phi[\Gamma]\}$ via the action of its image $\overline{\pi}_P(w)=\theta$. In view of Remark \ref{rmk:action}, we have
$$w \cdot \zeta_\beta= \theta \cdot \zeta_\beta= \zeta_{\rho(\theta)(\beta)}.$$ We denote the orbit of an element $\zeta_\beta$ under the action of $w$  by $\mathcal{O}_\theta(\zeta_\beta)$ and denote a set of representatives of orbits of the action of  $w$ on $\{\zeta_\beta ~|~ \beta \in \Phi[\Gamma]\}$ by $T_\theta$.  Adapting the approach from \cite[Theorem 2.1]{MR4607569}, we obtain the following result.
\par

\begin{theorem}\label{crystallographic torsion}
Let $VA[\Gamma]$ be a spherical virtual Artin group.  Let $w= \big(\prod_{\beta \in \Phi[\Gamma]} \overline{\zeta}_{\beta}^{\,a_\beta} \big) \theta$ be an element of $VA[\Gamma]/PVA[\Gamma]'$ and $T_\theta$ a set of representatives of orbits of the action of  $w$ on the set $\{\zeta_\beta ~|~ \beta \in \Phi[\Gamma]\}$. Then $w$ has order $t$ if and only if $\theta$ has order $t$ and $\sum_{\zeta_\beta \in \mathcal{O}_\theta(\zeta_\gamma)} a_\beta=0$ for all $\zeta_\gamma \in T_\theta$.
\end{theorem}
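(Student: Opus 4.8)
The plan is to work entirely inside the semidirect product decomposition $VA[\Gamma]/PVA[\Gamma]' \cong \big(PVA[\Gamma]/PVA[\Gamma]'\big)\rtimes W[\Gamma] \cong \mathbb{Z}^n \rtimes W[\Gamma]$ furnished by the split sequence \eqref{splitting for crystallographic} together with Proposition \ref{prop:freeabelian}, writing the free abelian part additively. Thus I would write $w = (a,\theta)$ with $a = (a_\beta)_{\beta\in\Phi[\Gamma]} \in \mathbb{Z}^n$ and $\theta \in W[\Gamma]$, where by Remark \ref{rmk:action} the element $\theta$ acts on $\mathbb{Z}^n$ by the basis permutation $\overline{\zeta}_\beta \mapsto \overline{\zeta}_{\rho(\theta)(\beta)}$. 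An immediate induction then gives the power formula
\[
w^k = \Bigl(\,\sum_{i=0}^{k-1}\theta^i\cdot a,\ \theta^k\,\Bigr)
\]
for all $k\ge 1$, so that $w^k = 1$ if and only if $\theta^k = 1$ and $\sum_{i=0}^{k-1}\theta^i\cdot a = 0$ in $\mathbb{Z}^n$.

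First I would show that finite order of $w$ forces $\operatorname{ord}(w) = \operatorname{ord}(\theta)$. Assume $w^t = 1$; then $\theta^t = 1$, so $d := \operatorname{ord}(\theta)$ divides $t$, say $t = dm$. Grouping the sum $\sum_{i=0}^{t-1}\theta^i\cdot a$ into $m$ consecutive blocks of length $d$ and using $\theta^d = 1$ gives $0 = \sum_{i=0}^{t-1}\theta^i\cdot a = m\bigl(\sum_{i=0}^{d-1}\theta^i\cdot a\bigr)$; since $\mathbb{Z}^n$ is torsion-free, $\sum_{i=0}^{d-1}\theta^i\cdot a = 0$, and hence $w^d = (0,\theta^d) = 1$. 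Combined with $d\mid t$ this forces $t = d = \operatorname{ord}(\theta)$. For the orbit-sum criterion, fix $\gamma\in\Phi[\Gamma]$ and let $\ell$ be the size of the orbit $\mathcal{O}_\theta(\zeta_\gamma)$, so $\ell\mid d$. The coefficient of $\overline{\zeta}_\gamma$ in $\theta^i\cdot a$ equals $a_{\rho(\theta^{-i})(\gamma)}$, and as $i$ runs through $0,\dots,d-1$ each element of the $\langle\rho(\theta)\rangle$-orbit of $\gamma$ is hit exactly $d/\ell$ times; hence the coefficient of $\overline{\zeta}_\gamma$ in $\sum_{i=0}^{d-1}\theta^i\cdot a$ is $(d/\ell)\sum_{\zeta_\beta\in\mathcal{O}_\theta(\zeta_\gamma)}a_\beta$. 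Torsion-freeness of $\mathbb{Z}^n$ then yields $\sum_{\zeta_\beta\in\mathcal{O}_\theta(\zeta_\gamma)}a_\beta = 0$ for every orbit, equivalently for every $\zeta_\gamma\in T_\theta$.

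For the converse, suppose $\theta$ has order $t$ and $\sum_{\zeta_\beta\in\mathcal{O}_\theta(\zeta_\gamma)}a_\beta = 0$ for all $\zeta_\gamma\in T_\theta$. Running the coefficient computation above with $d$ replaced by $t$ shows that every coordinate of $\sum_{i=0}^{t-1}\theta^i\cdot a$ vanishes, so $w^t = 1$ by the power formula; and for $0 < k < t$ we have $\theta^k\ne 1$, hence $w^k\ne 1$. Therefore $w$ has order exactly $t$, completing the proof.

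I do not anticipate a genuine obstacle: this is the semidirect-product computation that also drives the analogous statements for virtual braid, twin, and triplet groups in \cite{MR4607569, MR4750162}. The two points needing care are keeping the permutation action of $\theta$ on the basis indexed by $\Phi[\Gamma]$ organized through its orbit decomposition, and the repeated use of torsion-freeness of $\mathbb{Z}^n$ to cancel the multiplicities $m$ and $d/\ell$; it is precisely this cancellation that converts the raw power-vanishing condition into the clean orbit-sum criterion of the statement.
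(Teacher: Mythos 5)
Your proof is correct and follows essentially the same route as the paper: compute $w^k$ in the semidirect product $\big(PVA[\Gamma]/PVA[\Gamma]'\big)\rtimes W[\Gamma]$, read off the exponent of each $\overline{\zeta}_\beta$, and reduce the vanishing condition to orbit sums. You are in fact slightly more explicit than the paper, which glosses over both the multiplicity factor $t/\ell$ (and hence the appeal to torsion-freeness of $\mathbb{Z}^n$) and the step showing that finite order of $w$ forces $\operatorname{ord}(w)=\operatorname{ord}(\theta)$.
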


\begin{proof}
We have
\begin{eqnarray*}
& & \Bigg( (\prod_{\beta \in \Phi[\Gamma]} \overline{\zeta}_{\beta}^{\,a_\beta} ) \theta\Bigg)^t \\
&=& \Bigg(\prod_{\beta \in \Phi[\Gamma]} \overline{\zeta}_{\beta}^{\, a_\beta} \Bigg) \Bigg(  \theta (\prod_{\beta \in \Phi[\Gamma]} \overline{\zeta}_{\beta}^{\, a_\beta} ) \theta^{-1} \Bigg) \Bigg(\theta^2 (\prod_{\beta \in \Phi[\Gamma]} \overline{\zeta}_{\beta}^{\, a_\beta} )  \theta^{-2} \Bigg) \cdots \Bigg( \theta^{t-1} (\prod_{\beta \in \Phi[\Gamma]} \overline{\zeta_\beta}^{a_\beta} ) \theta^{-(t-1)} \Bigg) \theta^t \\
&=& \Bigg(\prod_{\beta \in \Phi[\Gamma]} \overline{\zeta}_{\beta}^{\,a_\beta} \Bigg) \Bigg(\prod_{\beta \in \Phi[\Gamma]} \overline{\zeta}_{{\rho(\theta)(\beta)}}^{\,a_\beta} \Bigg) \Bigg(\prod_{\beta \in \Phi[\Gamma]} \overline{\zeta}_{{\rho(\theta)^2(\beta)}}^{\,a_\beta} \Bigg) \cdots \Bigg( \prod_{\beta \in \Phi[\Gamma]} \overline{\zeta}_{{\rho(\theta)^{t-1}(\beta)}}^{\,a_\beta} \Bigg) \theta^t.\\
& =& \prod_{\beta \in \Phi[\Gamma]} \bigg(\overline{\zeta}_{\beta}^{\, a_\beta}\, \overline{\zeta}_{\beta}^{\, a_{\rho(\theta)^{-1}(\beta)}}\, \overline{\zeta}_{\beta}^{\,a_{\rho(\theta)^{-2}(\beta)}} \, \cdots \, \overline{\zeta}_{\beta}^{\,a_{\rho(\theta)^{1-t}(\beta)}}\bigg) \theta^t\\
& =& \prod_{\beta \in \Phi[\Gamma]} \bigg(\overline{\zeta}_{\beta}^{\, {a_\beta}+a_{\rho(\theta)^{-1}(\beta)}+a_{\rho(\theta)^{-2}(\beta)}+ \cdots +a_{\rho(\theta)^{1-t}(\beta)}}\bigg) \theta^t.
\end{eqnarray*}
The last expression implies that the total exponent sum of the generator $\overline{\zeta}_{\beta}$ is the same as that of the generator $\overline{\zeta}_{{\rho(\theta)^\ell(\beta)}}$ for each $0 \le \ell \le t-1$.  We see that 
$$\Bigg( (\prod_{\beta \in \Phi[\Gamma]} \overline{\zeta}_{\beta}^{\,a_\beta}) \theta \Bigg)^t=1$$
if and only if $\theta^t=1$ and
$$
\sum_{0 \le \ell \le t-1} a_{\rho(\theta)^\ell(\beta)}=0 ~~\text{for all}~~\beta \in \Phi[\Gamma].$$
Using the orbit representatives from $T_\theta$,  the latter is equivalent to 
$$\sum_{\zeta_\beta \in \mathcal{O}_\theta(\zeta_{\gamma})} a_\beta=0~~\text{for all}~~\zeta_{\gamma} \in T_\theta,
$$
which completes the proof.
\end{proof}

\subsection{Conjugacy in the crystallographic quotient of $VA[\Gamma]$} 
Let $VA[\Gamma]$ be a spherical virtual Artin group. Let $w=(\prod_{\beta \in \Phi[\Gamma]} \overline{\zeta}_{\beta}^{\,a_\beta})\theta$ and  $w'=(\prod_{\beta \in \Phi[\Gamma]} \overline{\zeta}_{\beta}^{\, b_\beta})\theta'$ be elements of $ VA[\Gamma]/PVA[\Gamma]'$. If $\gamma\in VA[\Gamma]/PVA[\Gamma]'$ such that $\gamma w \gamma^{-1}=w'$, then we have $\overline{\pi}_P(\gamma)\, \theta \, \overline{\pi}_P(\gamma)^{-1}=\theta'$. On the other hand, if  $\theta$ and $\theta'$ are conjugate by an element of $W[\Gamma]$, then by choosing a lift  $\tilde{\gamma}$ of this conjugating element in $ VA[\Gamma]/PVA[\Gamma]'$, we can write $\tilde{\gamma} w'\tilde{\gamma}^{-1}=\prod_{\beta \in \Phi[\Gamma]} (\overline{\zeta}_{\beta}^{\,b_\beta'})\theta$ for some $b_\beta' \in \mathbb{Z}$. Thus, without loss of generality, we can assume that $w'=(\prod_{\beta \in \Phi[\Gamma]} \overline{\zeta}_{\beta}^{\,b_\beta})\theta$ 

\begin{theorem}\label{crystallographic conjugacy}
Let $VA[\Gamma]$ be a spherical virtual Artin group. The elements $w=(\prod_{\beta \in \Phi[\Gamma]} \overline{\zeta}_{\beta}^{\,a_\beta})\theta$ and $w'=(\prod_{\beta \in \Phi[\Gamma]} \overline{\zeta}_{\beta}^{ \,b_\beta})\theta$ are conjugate in $VA[\Gamma]/PVA[\Gamma]'$ if and only if 
$$\sum_{0\le t \le m_\beta-1}f_{\rho(\theta)^{t}(\beta)}+c_{\rho(\overline{\pi}_P(\eta))^{-1}(\beta)}=d_\beta$$
where $\zeta_\beta \in T_\theta$, $m_\beta=|\mathcal{O}_{\theta}(\zeta_{\beta})|$, $c_{\gamma}=\sum_{\zeta_\beta\in\mathcal{O}_{\theta}(\zeta_{\gamma})}{a_\beta}$, $d_{\gamma}=\sum_{\zeta_\beta\in\mathcal{O}_{\theta}(\zeta_{\gamma})}{b_\beta}$, $c_{\rho(\overline{\pi}_P(\eta))^{-1}(\gamma)}=\sum_{\zeta_\beta\in\mathcal{O}_{\theta}(\zeta_{\rho(\overline{\pi}_P(\eta))^{-1}(\gamma)})}{a_\beta}$ for $\zeta_\gamma \in T_\theta$ and the element  $\eta\in VA[\Gamma]/PVA[\Gamma]'$ is chosen such that $\overline{\pi}_P(\eta)$ is in the centralizer of $\theta$ in $W[\Gamma]$ with $\eta\theta\eta^{-1}= (\prod_{\beta\in \Phi[\Gamma]} \overline{\zeta}_{\beta}^{\,f_{\beta}})\theta$.
\end{theorem}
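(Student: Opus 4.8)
The plan is to reduce the conjugacy problem in $VA[\Gamma]/PVA[\Gamma]'$ to a computation in the free abelian normal subgroup, using the semidirect product structure $VA[\Gamma]/PVA[\Gamma]' \cong \big(PVA[\Gamma]/PVA[\Gamma]'\big) \rtimes W[\Gamma]$ from \eqref{splitting for crystallographic}. As already observed in the paragraph preceding the statement, we may assume the $W[\Gamma]$-parts of $w$ and $w'$ are equal to a common element $\theta$, since otherwise they are not conjugate (the quotient map $\overline{\pi}_P$ must carry a conjugating element to an element conjugating $\theta$ to $\theta'$, and conversely such a conjugator can be lifted and absorbed). So the task becomes: given $w=(\prod_\beta \overline{\zeta}_\beta^{\,a_\beta})\theta$ and $w'=(\prod_\beta \overline{\zeta}_\beta^{\,b_\beta})\theta$, decide when there is $\eta\in VA[\Gamma]/PVA[\Gamma]'$ with $\eta w \eta^{-1}=w'$.

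The key step is to expand $\eta w \eta^{-1}$ explicitly. Write $\eta = (\prod_\beta \overline{\zeta}_\beta^{\,g_\beta})\,\xi$ with $\xi=\overline{\pi}_P(\eta)\in W[\Gamma]$. A direct computation using \eqref{conjugation action of coxeter group}, Remark~\ref{rmk:action}, and commutativity of $PVA[\Gamma]/PVA[\Gamma]'$ shows that
\[
\eta w \eta^{-1} = \Big(\prod_{\beta\in\Phi[\Gamma]} \overline{\zeta}_\beta^{\,g_\beta - g_{\rho(\theta')^{-1}(\beta)} + a_{\rho(\xi)^{-1}(\beta)}}\Big)\,\xi\theta\xi^{-1},
\]
where $\theta'=\xi\theta\xi^{-1}$. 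Hence $\eta w\eta^{-1}=w'$ forces $\xi$ to centralize $\theta$ (so that $\xi\theta\xi^{-1}=\theta$) and forces the exponent identity
\[
g_\beta - g_{\rho(\theta)^{-1}(\beta)} + a_{\rho(\xi)^{-1}(\beta)} = b_\beta \qquad\text{for all }\beta\in\Phi[\Gamma].
\]
Now I would analyze this system orbit-by-orbit under the cyclic action generated by $\rho(\theta)$. Fix an orbit $\mathcal{O}_\theta(\zeta_\gamma)$ of size $m_\gamma$ with representative $\zeta_\gamma\in T_\theta$. Summing the identity over $\beta$ ranging through that orbit, the telescoping $\sum_\beta (g_\beta - g_{\rho(\theta)^{-1}(\beta)})$ collapses to $0$, leaving $\sum_{\zeta_\beta\in\mathcal{O}_\theta(\zeta_\gamma)} a_{\rho(\xi)^{-1}(\beta)} = \sum_{\zeta_\beta\in\mathcal{O}_\theta(\zeta_\gamma)} b_\beta$. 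Since $\rho(\xi)$ commutes with $\rho(\theta)$, it permutes the orbits, so the left side equals $c_{\rho(\overline{\pi}_P(\eta))^{-1}(\gamma)}$ and the right side equals $d_\gamma$; combined with the extra twist $\sum_{0\le t\le m_\gamma-1} f_{\rho(\theta)^t(\gamma)}$ coming from absorbing the centralizing lift $\eta$ as in the statement, this yields exactly the asserted equation. Conversely, given that the orbit-sum conditions hold, I would solve for the $g_\beta$ within each orbit: fix $g$ at one point of the orbit arbitrarily and propagate around the cycle via $g_{\rho(\theta)^{-1}(\beta)} = g_\beta + a_{\rho(\xi)^{-1}(\beta)} - b_\beta$; the orbit-sum condition is precisely the consistency requirement that makes this well-defined after going once around. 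This produces the desired conjugator, proving sufficiency.

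The main obstacle, and the point requiring the most care, is the bookkeeping around the choice of $\xi=\overline{\pi}_P(\eta)$ in the centralizer $C_{W[\Gamma]}(\theta)$ and the resulting reindexing of orbits. One must check that the map $\rho(\xi)$ indeed sends the orbit $\mathcal{O}_\theta(\zeta_\gamma)$ bijectively onto the orbit $\mathcal{O}_\theta(\zeta_{\rho(\xi)(\gamma)})$ (using $\rho(\xi)\rho(\theta)\rho(\xi)^{-1}=\rho(\theta)$), that the quantities $c_\gamma, d_\gamma, f_\gamma$ are genuinely orbit-invariants so the statement is independent of the choice of representatives in $T_\theta$, and that absorbing a centralizing lift $\tilde\gamma$ (as done in the paragraph before the theorem to normalize $w'$) interacts correctly with these sums — this is where the $\sum_{0\le t\le m_\beta-1} f_{\rho(\theta)^t(\beta)}$ term enters and must be tracked precisely. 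Modeling the argument on \cite[Theorem 2.1]{MR4607569} (whose approach is cited just before Theorem~\ref{crystallographic torsion}) should make each of these checks routine, but the notation is dense and the indices must be handled with discipline.
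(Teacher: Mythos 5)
Your proposal is correct and follows essentially the same route as the paper: reduce to a common $\theta$, turn the conjugation condition into a linear system for the exponents $g_\beta$ over the $\rho(\theta)$-orbits, and observe that solvability is exactly the matching of orbit sums (your telescoping argument is the paper's circulant-matrix computation with $[I_{\beta_i}-M_{\beta_i}]$ in different clothing). The one point to tidy is the $f$-term: your direct expansion yields the condition $c_{\rho(\xi)^{-1}(\gamma)}=d_\gamma$ with no $f$'s, and to reconcile this with the stated condition you should note that for any lift $\eta=\big(\prod_\beta\overline{\zeta}_\beta^{\,g_\beta}\big)\overline{\iota}(\xi)$ of a centralizing $\xi$ one has $f_\beta=g_\beta-g_{\rho(\theta)^{-1}(\beta)}$, so each orbit sum $\sum_{0\le t\le m_\gamma-1}f_{\rho(\theta)^{t}(\gamma)}$ vanishes and the two conditions coincide, whereas the paper keeps $f$ explicit by first normalizing $w,w'$ and then splitting the conjugator as $Y\eta$.
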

\begin{proof}
Let $T_\theta=\{\zeta_{\beta_1}, \ldots, \zeta_{\beta_r}\}$ be a set of representatives of orbits of the action of $\theta$ on $\{\zeta_\beta ~|~ \beta \in \Phi[\Gamma]\}$. We claim that $w$ is conjugate to  $(\prod_{\zeta_{\beta_i}\in T_{\theta}} \overline{\zeta}_{\beta_i}^{\,c_{\beta_i}})\theta$, where $c_{\beta_i}=\sum_{\zeta_\beta\in\mathcal{O}_{\theta}(\zeta_{\beta_i})}{a_\beta}$ for each $\zeta_{\beta_i}\in T_{\theta}$, whereas $w'$ is conjugate to $(\prod_{\zeta_{\beta_i}\in T_{\theta}} \overline{\zeta}_{\beta_i}^{\,d_{\beta_i}})\theta$, where $d_{\beta_i}=\sum_{\zeta_\beta\in\mathcal{O}_{\theta}(\zeta_{\beta_i})}{b_\beta}$ for each $\zeta_{\beta_i}\in T_{\theta}$
\par

Let $Y=\prod_{\beta \in \Phi[\Gamma]}\overline{\zeta}_{\beta}^{\, y_\beta}\in PVA[\Gamma]/PVA[\Gamma]'$, where $y_\beta\in\mathbb Z$. Then we have
\begin{eqnarray*}
& &Y \Bigg(\big(\prod_{\beta \in \Phi[\Gamma]}\overline{\zeta}_{\beta}^{\,a_\beta} \big)\theta \Bigg)Y^{-1}=\big(\prod_{\zeta_{\beta_i}\in T_{\theta}} \overline{\zeta}_{\beta_i}^{\,c_{\beta_i}} \big)\theta\\
\Leftrightarrow & & Y \Bigg( \big(\prod_{\beta \in \Phi[\Gamma]} \overline{\zeta}_{\beta}^{\,a_\beta} \big)\theta  \Bigg)Y^{-1} \theta^{-1}=\prod_{\zeta_{\beta_i}\in T_{\theta}} \overline{\zeta}_{\beta_i}^{\,c_{\beta_i}}\\
\Leftrightarrow & & Y \Bigg(\prod_{\beta \in \Phi[\Gamma]} \overline{\zeta}_{\beta}^{\,a_\beta}\Bigg)  \Bigg(\prod_{\beta \in \Phi[\Gamma]}\overline{\zeta}_{{\rho(\theta)(\beta)}}^{\,-y_\beta} \Bigg)=\prod_{\zeta_{\beta_i}\in T_{\theta}}\overline{\zeta}_{\beta_i}^{\,c_{\beta_i}}
\end{eqnarray*}
\begin{align}\label{se1}
\Leftrightarrow\begin{cases}
	y_{\beta}+a_{\beta}- y_{\rho(\theta)^{-1}(\beta)}=0 &~\mbox{ if }~\zeta_\beta\notin T_{\theta},\\
	y_{\beta_i}+a_{\beta_i}-y_{\rho(\theta)^{-1}(\beta_i)}=c_{\beta_i} &~\mbox{ if }~\zeta_\beta=\zeta_{\beta_i}\in T_{\theta}.
\end{cases}
\end{align}
The system of equations \eqref{se1} has a solution if and only if the following subsystem of equations has a solution   
	\begin{align}\label{se2}
\begin{cases}
		y_{\rho(\theta)^t(\beta_i)}+a_{\rho(\theta)^t(\beta_i)}-y_{\rho(\theta)^{-(1+m_{i}-t)}(\beta_i)}=0,\\
		y_{\beta_i}+a_{\beta_i}-y_{\rho(\theta)^{-1}(\beta_i)}=c_{\beta_i},
	\end{cases}
\end{align}
for all $1\le t<m_{i}$ and $\zeta_{\beta_i}\in T_{\theta}$, where $m_{i}=|\mathcal{O}_{\theta}(\zeta_{\beta_i})|$.  We show that the subsystems of equations \eqref{se2} admits a solution. For fixed $\zeta_{\beta_i}\in T_{\theta}$, consider the elements $e_1=\zeta_{\beta_i},e_2=\zeta_{\rho(\theta)(\beta_i)},\dots,e_{m_{i}}=\zeta_{\rho(\theta)^{m_{i}-1}(\beta_i)}$. Then the  matrix of $\theta$ with respect to the ordered set $\{e_1,e_2,\dots,e_{m_i}\}$ is given by
$$M_{\beta_i}=
\left(\begin{matrix}	0&0&\ldots&0&1\\
	1&0&\ldots&0&0\\
	0&1&\ldots&0&0\\
	\vdots&\vdots&\ddots&\vdots&\vdots\\ 
	0&0&\ldots&1&0
\end{matrix}\right).
$$
Thus, for each $\zeta_{\beta_i}\in T_{\theta}$, the subsystems of equations \eqref{se2} can be written as
$$
[I_{\beta_i}-M_{\beta_i}]
\left(\begin{matrix}
	y_{\beta_i}\\
	y_{\rho(\theta)(\beta_i)}\\
	\vdots\\
	y_{\rho(\theta)^{m_{i}-1}(\beta_i)}
\end{matrix}\right)=
\left(\begin{matrix}	-a_{\beta_i}+c_{\beta_i}\\
	-a_{\rho(\theta)(\beta_i)}\\
	\vdots\\
	-a_{\rho(\theta)^{m_{i}-1}(\beta_i)}
\end{matrix}\right),
$$ 
where $I_{\beta_i}$ is the $m_{i}\times m_{i}$ identity matrix and $m_i=|\mathcal{O}_{\theta}(\zeta_{\beta_i})|$. Analysing the matrix, we conclude that \eqref{se2}) has a solution if and only if $$c_{\beta_i}=\sum_{\zeta_\beta\in\mathcal{O}_{\theta}(\zeta_{\beta_i})}{a_\beta}=\sum_{0 \le \ell \le m_{i}-1} a_{\rho(\theta)^\ell(\beta_i)}$$ for all $\zeta_{\beta_i}\in T_{\theta}$. Similar arguments establish the second assertion of the claim. 
\par

Next, we analyse the conditions under which  $(\prod_{\zeta_{\beta_i}\in T_{\theta}} \overline{\zeta}_{\beta_i}^{\, c_{\beta_i}})\theta$ and $(\prod_{\zeta_{\beta_i}\in T_{\theta}} \overline{\zeta}_{\beta_i}^{\, d_{\beta_i}})\theta$ are conjugate in $VA[\Gamma]/PVA[\Gamma]'$. We choose $\eta\in VA[\Gamma]/PVA[\Gamma]'$ such that $\overline{\pi}_P(\eta)$ is in the centralizer of $\theta$ in $W[\Gamma]$. This gives $\eta\theta\eta^{-1}= (\prod_{\beta\in \Phi[\Gamma]} \overline{\zeta}_{\beta}^{\, f_{\beta}})\theta$ for some $f_{\beta_i} \in \mathbb{Z}$. Let $Y=\prod_{\beta \in \Phi[\Gamma]}\overline{\zeta}_{\beta}^{\,y_\beta}\in PVA[\Gamma]/PVA[\Gamma]'$, where $y_\beta\in\mathbb Z$. Then we have
\begin{small}
\begin{eqnarray}
\nonumber & & Y\eta \Bigg(\prod_{\zeta_{\beta_i}\in T_{\theta}} \overline{\zeta}_{\beta_i}^{\, c_{\beta_i}} \Bigg) \theta\eta^{-1}Y^{-1}=(\prod_{\zeta_{\beta_i}\in T_{\theta}} \overline{\zeta}_{\beta_i}^{\,d_{\beta_i}})\theta\\
\nonumber  \Leftrightarrow & & Y\eta \Bigg(\prod_{\zeta_{\beta_i}\in T_{\theta}}\overline{\zeta}_{\beta_i}^{\,c_{\beta_i}}\Bigg) \theta\eta^{-1}Y^{-1}\theta^{-1}=\prod_{\zeta_{\beta_i}\in T_{\theta}} \overline{\zeta}_{\beta_i}^{\, d_{\beta_i}}\\
\nonumber  \Leftrightarrow & & Y \Bigg(\prod_{\zeta_{\beta_i}\in T_{\theta}} \overline{\zeta}_{{\rho(\overline{\pi}_P(\eta))(\beta_i)}}^{\,c_{\beta_i}}\Bigg) \eta\theta\eta^{-1}Y^{-1}\theta^{-1}=\prod_{\zeta_{\beta_i}\in T_{\theta}} \overline{\zeta}_{\beta_i}^{\,d_{\beta_i}} ,\\
\nonumber  \Leftrightarrow & & Y \Bigg(\prod_{\zeta_{\beta_i}\in T_{\theta}} \overline{\zeta}_{{\rho(\overline{\pi}_P(\eta))(\beta_i)}}^{\, c_{\beta_i}}\Bigg) \Bigg(\prod_{\beta \in \Phi[\Gamma]} \overline{\zeta}_{\beta}^{\,f_\beta} \Bigg) \theta Y^{-1}\theta^{-1}=\prod_{\zeta_{\beta_i}\in T_{\theta}} \overline{\zeta}_{\beta_i}^{\,d_{\beta_i}},\\
\label{mid equation} \quad \Leftrightarrow & & \Bigg(\prod_{\beta \in \Phi[\Gamma]}\overline{\zeta}_{\beta}^{\,y_\beta} \Bigg) \Bigg(\prod_{\zeta_{\beta_i} \in T_{\theta}} \overline{\zeta}_{{\rho(\overline{\pi}_P(\eta))(\beta_i)}}^{\,c_{\beta_i}} \Bigg) \Bigg(\prod_{\beta \in \Phi[\Gamma]} \overline{\zeta}_{\beta}^{\,f_\beta} \Bigg) \Bigg(\prod_{\beta \in \Phi[\Gamma]}\overline{\zeta}_{{\rho(\theta)(\beta)}}^{\, -y_\beta} \Bigg)=\prod_{\zeta_{\beta_i}\in T_{\theta}} \overline{\zeta}_{\beta_i}^{\,d_{\beta_i}}.
\end{eqnarray}
\end{small}

We have defined $c_{\beta_i}$ for each $\zeta_{\beta_i} \in T_{\theta}$. Given any $\beta \in \Phi[\Gamma]$, we have $\zeta_\beta\in  \mathcal{O}_{\theta}(\zeta_{\beta_i})$ for some $\zeta_{\beta_i} \in T_\theta$. In this case, we set $c_\beta=c_{\beta_{i}}$.  For each $\zeta_{\beta_i}\in T_\theta$, note that $\rho(\overline{\pi}_P(\eta))^{-1}\cdot\zeta_{\beta_i}=\zeta_{\rho(\overline{\pi}_P(\eta))^{-1}(\beta_i)} \in \mathcal{O}_{\theta}(\zeta_{\beta_j})$ for a unique $\zeta_{\beta_j} \in T_\theta$. This implies that there exists an integer $0\leq l'\leq m_j$ such that  $\rho(\theta)^{l'}\cdot \zeta_{\beta_j}=\rho(\overline{\pi}_P(\eta))^{-1}\cdot\zeta_{\beta_i}$. Since  $\overline{\pi}_P(\eta)$ is in the centralizer of $\theta$, we conclude that $\zeta_{\rho(\overline{\pi}_P(\eta))(\beta_j)}=\zeta_{\rho(\theta)^{-l'}(\beta_i)}$. Setting $l = -l' \mod m_i$, we obtain $\zeta_{\rho(\overline{\pi}_P(\eta))(\beta_j)}=\zeta_{\rho(\theta)^{l}(\beta_i)}$. In view of Proposition \ref{prop:freeabelian}, this further gives  $\rho(\overline{\pi}_P(\eta))(\beta_j)=\rho(\theta)^l(\beta_i)$.
\par

It is easy to see that $\zeta_{\beta_j}\in \mathcal{O}_{\theta}(\zeta_{\rho(\overline{\pi}_P(\eta))^{-1}(\beta_i)})$, and hence $c_{\beta_j} = c_{\rho(\overline{\pi}_P(\eta))^{-1}(\beta_i)}$. If $l=0$, then equation \eqref{mid equation} holds if and only if the following system of equations has a solution 
\begin{align}\label{mid equation 1}
\begin{cases}
y_{\rho(\theta)^k(\beta_i)}+f_{\rho(\theta)^k(\beta_i)}-y_{\rho(\theta)^{-(1+m_i-k)}(\beta_i)}=0,\\
y_{\beta_i}+f_{\beta_i}+c_{\rho(\overline{\pi}_P(\eta))^{-1}(\beta_i)}-y_{\rho(\theta)^{-1}(\beta_i)}=d_{\beta_i},	
\end{cases} 
\end{align}
for $1\le k<m_i$, where $m_i=|\mathcal{O}_{\theta}(\zeta_{\beta_i})|$. If $1\leq l \leq m_i$, then equation \eqref{mid equation} holds if and only if the following system of equations has a solution 
\begin{align}\label{mid equation 2}
\begin{cases}
y_{\rho(\theta)^k(\beta_i)}+f_{\rho(\theta)^k(\beta_i)}-y_{\rho(\theta)^{-(1+m_i-k)}(\beta_i)}=0,\\
y_{\rho(\theta)^l(\beta_i)}+f_{\rho(\theta)^l(\beta_i)}+c_{\rho(\overline{\pi}_P(\eta))^{-1}(\beta_i)}-y_{\rho(\theta)^{-(1+m_i-l)}(\beta_i)}=0,\\
y_{\beta_i}+f_{\beta_i}-y_{\rho(\theta)^{-1}(\beta_i)}=d_{\beta_i},	
\end{cases} 
\end{align}
for $1\le k<m_i$ and $k\not=l$, where $m_i=|\mathcal{O}_{\theta}(\zeta_{\beta_i})|$.  For fixed $\zeta_{\beta_i}\in T_{\theta}$, if $l=0$, then the system of equations \eqref{mid equation 1} can be written as
$$
[I_{\beta_i}-M_{\beta_i}]
\left(\begin{matrix}
	y_{\beta_i}\\
	y_{\rho(\theta)(\beta_i)}\\
	\vdots\\
	y_{\rho(\theta)^{m_{i}-1}(\beta_i)}
\end{matrix}\right)=
\left(\begin{matrix}	-f_{\beta_i}-c_{\rho(\overline{\pi}_P(\eta))^{-1}(\beta_i)}+d_{\beta_i}\\
	-f_{\rho(\theta)(\beta_i)}\\
	\vdots\\
	-f_{\rho(\theta)^{m_{i}-1}(\beta_i)}
\end{matrix}\right).
$$ 
Similarly, if $1\leq l \leq m_i$, then the system of equations \eqref{mid equation 2} can be written as
$$
[I_{\beta_i}-M_{\beta_i}]
\left(\begin{matrix}
	y_{\beta_i}\\
	\vdots\\
		y_{\rho(\theta)^l(\beta_i)}\\
	\vdots\\
	y_{\rho(\theta)^{m_{i}-1}(\beta_i)}
\end{matrix}\right)=
\left(\begin{matrix}	-f_{\beta_i}+d_{\beta_i}\\
	\vdots\\
		-f_{\rho(\theta)^l(\beta_i)}-c_{\rho(\overline{\pi}_P(\eta))^{-1}(\beta_i)}\\
	\vdots\\
	-f_{\rho(\theta)^{m_{i}-1}(\beta_i)}
\end{matrix}\right).
$$ 
Analysing the matrices, we conclude that \eqref{mid equation 1} and \eqref{mid equation 2} have a solution if and only if $\sum_{0\le t \le m_i-1}f_{\rho(\theta)^{t}(\beta_i)}+c_{\rho(\overline{\pi}_P(\eta))^{-1}(\beta_i)}=d_{\beta_i}$ for all $\zeta_{\beta_i}\in T_{\theta}$. This completes the proof of the theorem.
\end{proof}

\begin{remark}
Theorems \ref{crystallographic torsion} and \ref{crystallographic conjugacy} remain valid even if $VA[\Gamma]$ is not spherical or equivalently $\Phi[\Gamma]$ is infinite. In this case, the set $T_\theta$ of representatives of the orbits as well as the number of conditions in  Theorem \ref{crystallographic torsion} becomes infinite. Since $a_\beta = 0$ for all but finitely many $\beta$, only finitely many of these conditions are non-trivial. 
\par
Similarly,  the matrices $M_\beta$ may become infinite size in Theorem \ref{crystallographic conjugacy},  the corresponding system of equations still involves only finitely many non-zero variables and non-zero matrix entries. Hence, the arguments in the proofs remain valid.
\end{remark}
\medskip

\section{$R_\infty$-property of right-angled virtual Artin groups} \label{section twisted conjugacy}
In this section, we investigate twisted conjugacy in right-angled virtual Artin groups. There are many ways to associate graphs to Coxeter groups. In line with the standard convention, we use the following graph for right-angled Coxeter groups (which is not the Coxeter graph considered in the preceding sections).
\par

Let $\Upsilon$ be a simple graph with the vertex set $V(\Upsilon)$ and the edge set $E(\Upsilon)$. We can define a right-angled Coxeter group $W(\Upsilon)$ corresponding to $\Upsilon$ by the presentation 
$$
W(\Upsilon)= \Bigg\langle V(\Upsilon)~ \mid~ s^2=1~ \textrm{for all}~s \in V(\Upsilon) ~\textrm{and}~ s t= t s~\textrm{whenever}~ (s, t)  \in E(\Upsilon) \Bigg\rangle.
$$
Then, the corresponding right-angled virtual Artin group  $VA(\Upsilon)$ has the presentation 
\begin{eqnarray*}
VA(\Upsilon) &=& \Bigg\langle \sigma_s, \tau_s;~ s \in V(\Upsilon) \mid~ \tau_s^2=1~ \textrm{for all}~s \in V(\Upsilon)~\textrm{and}~\\
&&  \tau_s \tau_t= \tau_t \tau_s, ~\sigma_s \sigma_t= \sigma_t \sigma_s, ~ \tau_s \sigma_t= \sigma_t \tau_s~\textrm{whenever}~ (s, t)  \in E(\Upsilon) \Bigg\rangle.
\end{eqnarray*}

Given $\Upsilon$, consider the graph $\widetilde{\Upsilon}$ with the vertex set $V(\Upsilon) \times \{0,1\}$ and with the edge set described as follows:
\begin{enumerate}
\item $(s,0)$ and $(t,0)$ are connected by an edge in $\widetilde{\Upsilon}$ if and only if $(s, t) \in E(\Upsilon)$.
\item $(s,1)$ and $(t,1)$ are connected by an edge  in $\widetilde{\Upsilon}$ if and only if $(s, t) \in E(\Upsilon)$.
\item $(s,0)$ and $(t,1)$ are connected by an edge  in $\widetilde{\Upsilon}$ if and only if $(s, t) \in E(\Upsilon)$.
\end{enumerate}
If we label each vertex $(s,0)$ with $\mathbb Z$ and each vertex $(s,1)$ with $\mathbb Z_2$, then the graph product  $G(\widetilde{\Upsilon})$ is isomorphic to the right-angled virtual Artin group $VA(\Upsilon)$.
\medskip

\subsection{A generating set for the automorphism group of a graph product of cyclic groups}
Let $\Upsilon$ be a simple graph. The link $lk(v)$ of a vertex $v \in V(\Upsilon)$ is defined as the set of all the vertices that are connected to $v$ by an edge from $E(\Upsilon)$. The star  $st(v)$ of $v$ is defined to be $lk(v) \cup \{v\}$. It follows from \cite[Main Theorem]{MR2900856} that the automorphism group of a graph product $G(\Upsilon)$ of cyclic groups  is generated by the following four types of automorphisms: 

\begin{enumerate}
\item Labeled graph automorphism: It is an automorphism of $G(\Upsilon)$ induced by an automorphism $\gamma:\Upsilon \to \Upsilon$ of the graph $\Upsilon$, such that $G_v \cong G_{\gamma(v)}$ for every vertex $v \in V(\Upsilon)$.
\item Inversion $\iota_a$: It sends a generator $a$ to its inverse and leaves all other generators fixed.
\item Transvection $\tau_{a,b}$: It sends a generator $a$ to $ab$ and leaves all other generators fixed, where $b$ is another generator satisfying the following conditions. 
\item[] If $a$ is an infinite order generator, then $lk(a) \subset lk(b)$. 
\item[] If $a$ is a finite order generator, then $b$ is a finite order generator such that $st(a) \subset st(b)$ and the order of $b$ divides the order of $a$.
\item Partial conjugation $p_{b, C}$: If $b$ is a generator and $C$ is a connected component of $\Upsilon \setminus  st(b)$, then  $p_{b, C}$ sends each generator $a$ in $C$ to $ba b^{-1}$ and leaves the other generators fixed. We see that if $\Upsilon \setminus st(b)$ is connected, that is, $C=\Upsilon \setminus  st(b)$, then the partial conjugation $p_{b, C}$ is simply the inner automorphism induced by $b$.
\end{enumerate}

\begin{remark}\label{rmk:trans}
The transvections that are required in the generating set of the automorphism group of a right-angled virtual Artin group are given as follows:
\begin{itemize}
\item $\tau_{\sigma_t, \ast}$, where $\ast \in \{\sigma_s, \tau_s \mid s \in V(\Upsilon)\}\setminus \{\sigma_t\}$ such that $lk(\sigma_t)$ is a subset of the star of the vertex corresponding to the generator $\ast$.
\item $\tau_{\tau_s, \tau_t}$, where $st(\tau_s) \subset st(\tau_t)$.
\end{itemize}
\end{remark}
\medskip

\subsection{The $R_\infty$-property}
Let $G$ be a group and $\phi \in \Aut(G)$. Two elements $x, y\in G$ are said to be {\it $\phi$-conjugate} if there exists an element $g\in G$ such that $x = gy\phi(g^{-1})$. The equivalence classes under the relation of $\phi$-conjugation are called $\phi$-conjugacy classes. Taking $\phi$ to be the identity automorphism gives the usual conjugacy classes. The number  $R(\phi)$ of $\phi$-conjugacy classes  is called the \textit{Reidemeister number} of the automorphism $\phi$, where $R(\phi)\in \mathbb{N} \cup \{\infty\}$. We say that the group $G$ has the {\it $R_{\infty }$-property} if $R(\phi)= \infty$ for each $\phi \in \Aut(G)$. The idea of twisted conjugacy arose from the work of Reidemeister \cite{Reidemeister}, and  the study of the $R_{\infty }$-property in groups has garnered significant attention, largely due to its strong connection with Nielsen fixed-point theory. It has been proved recently in \cite[Theorem 3]{DekimpeDacibergOcampo2024} that the virtual braid group $VB_n$ has the $R_\infty$-property for each $n \ge 2$. For right-angled virtual Artin groups, we prove the following result.

\begin{theorem}\label{R infinity theorem}
Each right-angled virtual Artin group has the $R_\infty$-property.
\end{theorem}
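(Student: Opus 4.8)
The plan is to reduce the statement to a computation of Reidemeister numbers using the algebraic structure of $VA(\Upsilon)$ and the explicit generating set for $\Aut(G(\widetilde\Upsilon))$ recalled above. The standard strategy for $R_\infty$ results of this type is: identify a characteristic quotient $G \twoheadrightarrow Q$ on which every automorphism of $G$ descends, and on which one can show $R(\bar\phi) = \infty$ for all $\bar\phi \in \Aut(Q)$ (or more precisely for all automorphisms in the image of $\Aut(G)$); then invoke the elementary fact that if $\phi$ descends to $\bar\phi$ along a characteristic quotient, $R(\phi) \ge R(\bar\phi)$. The natural candidate quotient here is the abelianization, or a slight refinement of it. Writing $n = |V(\Upsilon)|$, the abelianization $VA(\Upsilon)^{\mathrm{ab}} \cong \mathbb{Z}^{n} \oplus \mathbb{Z}_2^{\,n}$, where the free part is generated by the images of the $\sigma_s$ and the torsion part by the images of the $\tau_s$. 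Since an automorphism of a finitely generated abelian group $A$ has $R(\psi) = \infty$ iff $\psi$ has eigenvalue $1$ on $A \otimes \mathbb{Q}$ (equivalently $\det(\psi - I) = 0$ on the free part), it suffices to show that every automorphism of $VA(\Upsilon)$ induces a map on $\mathbb{Z}^n$ (the free part of the abelianization) with $1$ as an eigenvalue.

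**The key steps**, in order, would be: (1) Observe that the free part $\mathbb{Z}^n$ of $VA(\Upsilon)^{\mathrm{ab}}$ is a characteristic quotient — it is the quotient of $VA(\Upsilon)^{\mathrm{ab}}$ by its torsion subgroup — so every $\phi \in \Aut(VA(\Upsilon))$ induces $\phi_* \in \GL_n(\mathbb{Z})$. (2) Use the generating set for $\Aut(G(\widetilde\Upsilon))$ (labeled graph automorphisms, inversions, transvections, partial conjugations) together with Remark~\ref{rmk:trans} to determine precisely how each generating automorphism acts on $\mathbb{Z}^n$. Partial conjugations act trivially on the abelianization, hence trivially on $\mathbb{Z}^n$. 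Inversions $\iota_{\sigma_s}$ act by $-1$ on one coordinate; inversions $\iota_{\tau_s}$ are trivial on $\mathbb{Z}^n$. Labeled graph automorphisms act by permutation matrices on $\mathbb{Z}^n$. The transvections $\tau_{\sigma_t,\sigma_s}$ act by an elementary matrix adding one coordinate to another; the transvections $\tau_{\sigma_t,\tau_s}$ and $\tau_{\tau_s,\tau_t}$ act trivially on $\mathbb{Z}^n$ (the target $\tau_s$ maps to $0$ in the free part). (3) Conclude that the image of $\Aut(VA(\Upsilon))$ in $\GL_n(\mathbb{Z})$ lies in the subgroup generated by permutation matrices, sign changes, and elementary transvections — but crucially, each generator either fixes a nonzero vector or, more carefully, the whole image lies in a proper subgroup that always admits a common fixed vector. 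The cleanest route: show the image of $\Aut(VA(\Upsilon))$ on $\mathbb{Z}^n$ preserves the ``all-ones-up-to-sign'' structure poorly — so instead I would argue directly that for each individual $\phi$, $\det(\phi_* - I) = 0$, by showing $\phi_*$ is conjugate (over $\mathbb{Q}$) to a matrix fixing the vector $(1,1,\dots,1)$ after absorbing signs, or by showing $\phi_*$ lies in the group of signed permutation matrices times unipotent upper-triangular matrices, which always has $\pm 1$ as an eigenvalue, and then passing to $\phi_*^2$ if necessary (note $R(\phi) \ge R(\phi^2)/(\text{something})$ is false in general, so one must be careful — better to show eigenvalue $1$ directly).

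**The main obstacle** is step (3): controlling the eigenvalue structure of $\phi_*$ when transvections and sign-change inversions are combined, since a product of elementary matrices and signed permutations need not have $1$ as an eigenvalue (e.g., $\begin{psmallmatrix}-1&1\\0&1\end{psmallmatrix}$ has eigenvalues $\pm 1$, fine, but $\begin{psmallmatrix}0&-1\\1&0\end{psmallmatrix}$ has eigenvalues $\pm i$). The resolution should come from a more refined characteristic quotient: rather than the full free abelianization, one restricts attention to a $\phi$-invariant sub- or quotient-lattice forced by the graph structure, or one uses the fact (following the $VB_n$ argument of Dekimpe--Gonçalves--Ocampo) that transvections $\tau_{\sigma_t,\sigma_s}$ can only occur when $lk(\sigma_t) \subseteq st(\sigma_s)$, which severely restricts which pairs $(s,t)$ are ``transvection-compatible'' and forces the relevant signed-permutation part to decompose into blocks each of which, combined with the available unipotent directions, retains eigenvalue $1$. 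I would handle this by a case analysis on the orbits of the signed-permutation part of $\phi_*$ and showing that on each orbit the transvections available are compatible enough that a fixed vector survives; alternatively, and more robustly, I would pass to a further quotient of $\mathbb{Z}^n$ (such as $\mathbb{Z}^n$ modulo the sublattice spanned by transvection-relevant differences) on which the induced action is by signed permutations that are in fact permutations (no signs) because the inversions $\iota_{\sigma_s}$ become detectable only there — on such a quotient the permutation action always fixes the sum of a cycle, giving eigenvalue $1$, hence $R(\phi) = \infty$. Completing this last reduction cleanly is where the real work lies.
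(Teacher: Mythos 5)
Your reduction to the free part $\mathbb{Z}^n$ of the abelianization cannot work, and the obstacle you flag in step (3) is not a technical hurdle but a genuine failure of the approach. Already for the edgeless graph on $n\ge 2$ vertices every transvection $\tau_{\sigma_t,\sigma_s}$ is permitted (the link condition $lk(\sigma_t)\subset lk(\sigma_s)$ is vacuous), so together with the inversions $\iota_{\sigma_s}$ and the graph automorphisms the image of $\Aut(VA(\Upsilon))$ in $\GL_n(\mathbb{Z})$ is all of $\GL_n(\mathbb{Z})$; in particular it contains matrices with no eigenvalue $1$, for which the induced Reidemeister number on the abelianization is finite. The same problem occurs for complete graphs: composing the factor swap with one inversion in $(\mathbb{Z}\ast\mathbb{Z}_2)^2$ induces $\left(\begin{smallmatrix}0&-1\\1&0\end{smallmatrix}\right)$ on $\mathbb{Z}^2$, with eigenvalues $\pm i$. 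So no refinement of the eigenvalue argument on the abelianization alone can prove the theorem; one must pass to a nonabelian characteristic quotient or work with the group itself.

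The paper's proof does exactly that, in two cases. For non-complete $\Upsilon$ it shows that the normal closure $K$ of $\{\tau_s\mid s\in V(\Upsilon)\}$ is characteristic (this is where the generating set of $\Aut$ and Remark \ref{rmk:trans} are actually used), identifies $VA(\Upsilon)/K$ with the right-angled Artin group on $\Upsilon$, and invokes Witdouck's theorem that every RAAG on a non-complete graph has the $R_\infty$-property, together with the elementary lemma that $R_\infty$ lifts along characteristic quotients. For complete $\Upsilon$, where the RAAG quotient is $\mathbb{Z}^n$ and hence useless, it instead writes $VA(\Upsilon)\cong(\mathbb{Z}\ast\mathbb{Z}_2)^n$, uses Genevois' computation $\Aut((\mathbb{Z}\ast\mathbb{Z}_2)^n)\cong\Aut(\mathbb{Z}\ast\mathbb{Z}_2)^n\rtimes S_n$, the fact that $\mathbb{Z}\ast\mathbb{Z}_2$ has the $R_\infty$-property, and a product criterion of Dekimpe--Senden. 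Your proposal is missing both of these external inputs and does not separate the two cases, which behave quite differently.
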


\begin{proof}
Let $\Upsilon$ be a simple graph, and $VA(\Upsilon)$ the associated right-angled virtual Artin group. First suppose that $\Upsilon$ is a non-complete graph. Setting $K:=\llangle \tau_s \mid s\in V(\Upsilon)  \rrangle$, we see that $K$ is invariant under all labeled graph automorphisms, inversions and partial conjugations. Further, in view of Remark \ref{rmk:trans}, we see that $K$ is also invariant under transvections. Consequently, $K$ is a characteristic subgroup of $VA(\Upsilon)$. In addition, the quotient $VA(\Upsilon)/K$ is the right-angled Artin group associated to the graph $\Upsilon$ \cite[Theorem 3.1]{MR2012965}. By \cite[Theorem 1.6]{Wit}, every right-angled Artin group associated to a non-complete graph has the $R_\infty$-property. An elementary observation \cite[Lemma 2.1]{MR3150726} shows that a group has the $R_\infty$-property if its quotient by a characteristic subgroup has the  $R_\infty$-property.  Hence, $VA(\Upsilon)$ has the $R_\infty$-property.
\par

Next, suppose that $\Upsilon$ is a complete graph on $n \ge 1$ vertices. Then the right-angled virtual Artin group  $VA(\Upsilon)$ is the $n$-fold direct product $(\mathbb Z \ast \mathbb Z_2)^n$. It follows from \cite[Theorem 1.1]{MR4808711} that $\Aut((\mathbb Z \ast \mathbb Z_2)^n) \cong \Aut(\mathbb Z \ast \mathbb Z_2)^n \rtimes S_n$. Further, it follows from \cite[Theorem 1]{MR4124669}  that $\mathbb Z \ast \mathbb Z_2$ has the $R_\infty$-property. Finally, \cite[Proposition 5.1.2]{MR4229464} implies that $VA(\Upsilon)$ has the $R_\infty$-property. 
\end{proof}

We conclude with the following question.

\begin{question}
Can we classify virtual Artin groups that possess the $R_\infty$-property?
\end{question}
\medskip

\noindent\textbf{Acknowledgments.}
The authors would like to thank Anthony Genevois for bringing to their attention earlier works on linearity of right-angled virtual Artin groups. These works had been overlooked by the authors, leading to a reproof of the same results in an earlier (arxiv) version of this paper. NKD acknowledges support from the NBHM grant 0204/1/2023/R \&D-II/1792.  PK is supported by the PMRF fellowship at IISER Mohali.  TKN is supported by the Start-up Research Grant SRG/2023/001556/PMS. MS is supported by the Swarna Jayanti Fellowship grants DST/SJF/MSA-02/2018-19 and SB/SJF/2019-20/04. 
\medskip

\noindent\textbf{Declaration.}
The authors declare that there is no data associated to this paper and that there are no conflicts of interests.
\medskip

\bigskip
\end{document}